\def\XXint#1#2#3{{\setbox0=\hbox{$#1{#2#3}{\int}$}
		\vcenter{\hbox{$#2#3$}}\kern-.5\wd0}}
\newcommand{\RR}{\mathbb R}
\newcommand{\Rd}{{\mathbb R}^3}
\newcommand{\Cd}{{\mathbb C}^3}
\newcommand{\NN}{\mathbb N}
\newcommand{\id}{\mathbf 1}
\newcommand{\Zd}{{\mathbb Z^3}}
\newcommand{\Td}{{\mathbb T^3}}
\newcommand{\TT}{{\mathbb T^3}}
\newcommand{\divv}{{\rm div \,}}
\newcommand{\spt}{\operatorname{spt}}
\newcommand{\rot}{\mathcal{R}}
\newcommand{\vertiii}[1]{{\left\vert\kern-0.25ex\left\vert\kern-0.25ex\left\vert #1 
		\right\vert\kern-0.25ex\right\vert\kern-0.25ex\right\vert}}
\newcommand{\qm}{{q\bmod3}}
\newcommand{\piloc}{\Pi_q^{\mathrm{local}}}
\newcommand{\pinonloc}{\Pi_q^{\mathrm{non-local}}}
\newcounter{comentcount}
\newcounter{teocount}
\newtheorem{lem}{Lemma}
\newtheorem{prop}{Proposition}
\newtheorem{teo}[teocount]{Theorem}
\newtheorem{conj}{Conjecture}
\title[]{Anomalous energy flux in critical $L^p$-based spaces}
\date{\today}
\author[J. Burczak]{Jan Burczak}
\address{Mathematisches Institut, Universit\"at Leipzig, D-04109 Leipzig, Germany}
\email{burczak@math.uni-leipzig.de}
\author[G. Sattig]{Gabriel Sattig}
\address{Mathematisches Institut, Universit\"at Leipzig, D-04109 Leipzig, Germany}
\email{sattig@math.uni-leipzig.de}
\thanks{This research was supported by the ERC Grant Agreement No. 724298. The authors thank S. Modena and L. Sz\'ekelyhidi for useful discussions.}
\begin{document}

    \begin{abstract}
		We construct a three-dimensional vector field that exhibits positive energy flux at every Littlewood-Paley shell and has the best possible regularity in $L^p$-based spaces, $p \le 3$; in particular, it belongs to $H^{(\frac56)^-}$.
		\end{abstract}
	
	\keywords{Turbulence, Intermittency, Anomalous Dissipation, Energy Cascade, Onsager's Conjecture}
    
    \subjclass{35Q31, 76F99}

	\maketitle

	\section{Introduction}
	\label{sec:intro}
	\subsection{Anomalous dissipation: What is known}
	
	In his famous 1949 work \cite{Ons49} Onsager conjectured energy dissipation `without final assistance of viscosity' in fluids. In today's mathematical literature such phenomenon is referred to as `anomalous dissipation' of solutions to the Euler equation. Onsager's original conjecture was stated in terms of Hölder spaces and is by now fully resolved: Constantin, E and Titi \cite{ConETit94} proved conservation of energy of any weak Euler solution which is $C^{1/3+\varepsilon}$-regular in space,
	while Isett \cite{Ise18} constructed non-conservative $C^{1/3-\varepsilon}$ Euler solutions, using the convex integration technique developed by De~Lellis and Sz\'ekelyhidi \cite{DLScontinuous}. For the truly dissipative case, see Buckmaster, De~Lellis, Sz\'{e}kelyhidi, and Vicol \cite{BDLSV19}.
	
	Both results remain valid if stated in terms of $L^3$-based Sobolev or Besov spaces, i.e.\ for $W^{1/3 \pm \varepsilon,3}$ or $B_{3,\infty}^{1/3\pm\varepsilon}$: the dissipative one trivially, the conservative part was shown by Duchon and Robert \cite{Duchon-Robert}. The exponent~$3$ appears naturally via the cubic term in the energy balance equation, but less is known if the integrability is below $3$, in particular for $H^s=W^{s,2}$. The best known result on the conservative side is the trivial one: by Sobolev embedding $H^{5/6+\varepsilon} \hookrightarrow W^{1/3+\varepsilon,3}$.
On the non-conservative side, recently $H^{1/2-\varepsilon}$ has been reached in Buckmaster, Masmoudi, Novack, and Vicol \cite{BMNV22} (which was later extended by Novack and Vicol \cite{NV22}).
While the exponent $1/2$ may be critical
for full ($h$-principle-type) flexibility of Euler solutions, 
it is conjectured (cf.~Problem~5 in the survey \cite{BuckmasterVicol-survey} by Buckmaster and Vicol) that $5/6$ is the threshold for energy conservation.
	\begin{conj}
		For any $s<\frac{5}{6}$ there are weak solutions to the Euler equation in $C_tH^s_x$ with strictly decreasing kinetic energy.
	\end{conj}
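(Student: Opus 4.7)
The plan is to try to upgrade the static construction of this paper into a dynamical one via convex integration with carefully tuned intermittent building blocks. The present paper produces, at the critical regularity $H^{5/6-\varepsilon}$, a single divergence-free vector field whose energy flux is positive on every Littlewood--Paley shell; this is essentially a static witness that the cascade compatible with strict energy dissipation is possible at the conjectured optimal Sobolev threshold. A proof of the conjecture would upgrade this witness to a genuine weak Euler solution with the same regularity and a prescribed strictly decreasing energy profile $e(t)$.

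Concretely, I would run a Nash iteration $u_{q+1}=u_q+w_{q+1}$ where the perturbation takes the form
\[
w_{q+1}(x,t) \;=\; \sum_{\xi\in\Xi} a_{\xi,q+1}(x,t)\, W_{\xi,\lambda_{q+1}}(x,t),
\]
with $W_{\xi,\lambda}$ an intermittent, space--time localized building block whose shell profile is modelled on the construction of this paper, so that its self-interactions reproduce the correct positive flux at every frequency shell. The amplitudes $a_{\xi,q+1}$ would be chosen so that the low-frequency part of $\sum_\xi a_{\xi,q+1}^2\, W_\xi\otimes W_\xi$ cancels the previous Reynolds stress $\mathring R_q$ up to an energy-draining term realizing $e(t)$. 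One then defines $\mathring R_{q+1}$ via $\idivn$ applied to the oscillation, transport and Nash errors in the standard way. Convergence in $H^s$ for $s<5/6$ would follow if each $w_{q+1}$ has an $H^{5/6-\varepsilon}$-norm that is at most $\lambda_q^{0^-}$, essentially saturating the static bound of this paper.

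The principal obstacle is the control of the quadratic self-interaction $w_{q+1}\otimes w_{q+1}-\mathring R_q$ at this critical regularity. All existing intermittent convex integration schemes pick up polynomial losses in the concentration parameter when bounding the interaction at the shell level, and it is precisely these losses that cap the current non-conservative constructions at $H^{1/2-\varepsilon}$. Reaching $5/6$ seems to require building blocks whose shell-by-shell interactions realize, iteration after iteration, the sharp flux balance exhibited by the paper's static vector field, respecting the decomposition of the flux into the local part $\piloc$ and the non-local part $\pinonloc$. Designing such a building block, together with an inductive estimate on $\mathring R_q$ in a norm weaker than $L^2$ (so as not to override the $H^{5/6}$ budget for $w_{q+1}$), is the substantial new ingredient needed beyond the present state of the art; the present paper should be read as evidence that no obvious Fourier-analytic obstruction rules this out.
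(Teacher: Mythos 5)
You have correctly read the paper: the statement is an open \emph{conjecture}, not a theorem, and the paper itself explicitly says ``In this work we do not prove the conjecture, but provide evidence in its favour.'' The paper's contribution is the static vector field $U\in B^{5/6}_{2,\infty}$ with positive flux at every shell; it makes no claim to produce a weak Euler solution. So there is no ``paper's own proof'' to compare against.

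Your proposal is an honest and sensible research program rather than a proof, and your final paragraph already concedes the gap. Let me make the gap concrete. A Nash iteration of the form $u_{q+1}=u_q+w_{q+1}$ with $w_{q+1}=\sum_\xi a_\xi W_{\xi,\lambda_{q+1}}$ succeeds only if (a) the low-frequency part of $\sum_\xi a_\xi^2\,W_\xi\otimes W_\xi$ can be tuned to cancel $\mathring R_q$ plus an energy pumping term, and (b) the new error $\mathring R_{q+1}$, obtained by applying a divergence inverse to the oscillation/transport/Nash errors, is summably small in a norm consistent with the $H^{5/6-}$ convergence budget. Point (a) requires a family of stationary (or slowly varying) solutions to a Euler-type linear system built from the $W_\xi$ whose second moments span the space of symmetric traceless matrices — a \emph{Mikado/Beltrami-type geometric lemma}. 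Your building block is modelled on the static field of this paper, but that field's positive flux is a single scalar $\Pi_q(U)\approx c$; it does not furnish the tensor-valued family needed to cancel an arbitrary $\mathring R_q$. Without that lemma the scheme cannot start. Point (b) is where the current state of the art genuinely stops at $H^{1/2-}$: the divergence inverse applied to the Nash error $w_{q+1}\cdot\nabla u_q$ and the transport error $\partial_t w_{q+1}+u_q\cdot\nabla w_{q+1}$ costs a full derivative at frequency $\lambda_{q+1}$, and with intermittency parameters tuned to place $w_{q+1}$ near $H^{5/6}$, the resulting $\mathring R_{q+1}$ is no longer small in $L^1$ (let alone in any space strong enough to iterate). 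You have not proposed a norm in which this loss closes, and simply demanding ``an inductive estimate on $\mathring R_q$ in a norm weaker than $L^2$'' is a restatement of the problem, not a solution.

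A further structural issue: the paper's flux decomposition into $\piloc$ and $\pinonloc$ works because the construction is static, deterministic, and Fourier-localized with the precise geometric separation of active regions proved in Propositions~\ref{prop:windmill} and~\ref{prop:nowhere-near}. A time-dependent convex-integration perturbation multiplied by smooth amplitudes $a_\xi(x,t)$ destroys this exact frequency localization (each $a_\xi W_\xi$ has a full band of frequencies), so the windmill and near-field exclusion arguments do not transfer and the shell-by-shell flux bookkeeping would need to be redone from scratch. In short, your proposal correctly identifies the target and the main obstacle, but the two ingredients you label as ``the substantial new ingredient'' — the building block and the critical error estimate — are precisely the proof, and they are absent.
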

	In this work we do not prove the conjecture, but provide evidence in its favour.
	
	\subsection{Energy flux and statement of the main result}
	The first step for proving conservation of energy is usually to regularize the Euler equation and to test it with the regularised solution:
	\begin{equation}\label{eq:mot1}
	\frac{d}{dt} \frac{1}{2} \int \left(S_q u\right)^2 \,dx= \int S_q u \cdot \partial_t S_q u \,dx = - \int S_q u \cdot S_q \left( u\cdot \nabla u\right) \,dx. \end{equation}
	Here $S_q$ denotes any linear regularizing operator such that $S_q\to Id$ as $q\to\infty$; in our case it will be the Littlewood-Paley projection onto functions with frequencies $\lesssim 2^q$. 
	Note that the pressure term vanishes due to incompressibility and linearity of $S_q$.

   The second step is showing that the right hand side of \eqref{eq:mot1}, called the energy flux (towards high frequencies), vanishes as $q\to\infty$, which holds if $u$ is a divergence free field with enough regularity; this step may be thus completely detached from the fact that $u$ solves the Euler equation!

	By \eqref{eq:mot1}, vanishing of the energy flux is a sufficient condition for conservation of energy. Even though it may not be a necessary condition, non-vanishing of the flux suggests an energy cascade as described by Richardson (cf.\ Frisch \cite{Frisch_book}), i.e.~continuous transport of energy from low to high and higher frequency structures, which is in turn seen as the mechanism of anomalous dissipation. Therefore, constructing a non-vanishing energy flux of certain regularity is a strong premise for non-conservative Euler solutions with that regularity. Such flux construction, with certain further desired properties, is the subject of this note.
	
	Let us define for a distributionally divergence-free vector field $u: \Td\to\Rd$
	\[
	\Pi_q (u)= \int_{\TT} S_q (u \otimes u) : \nabla S_q (u) \, dx,
	\]
	where $S_q$ denotes a Littlewood-Paley projection, to be precised later. The right hand side of \eqref{eq:mot1}, with $S_q$ disambiguated as a Littlewood-Paley projection, after applying incompressibility and integration by parts, is precisely our $\Pi_q (u)$.
	With this definition, and the standard notion of Besov spaces $B^{s}_{p, r}$ (defined in Section~\ref{sec:construction}) we state
	\begin{teo}\label{thm}
		For any $c \in \RR$ and $\delta>0$, there exists a divergence-free vector field $U \in B^{5/6}_{2,\infty} (\TT)$ such that
		\begin{equation}
		\liminf_{q \to \infty} \Pi_q (U) >c-\delta \qquad \text{ and } \qquad \limsup_{q\to\infty} \Pi_q(U)<c+\delta. 
		\label{limsup-liminf}
		\end{equation}%On the other hand, for any $u \in H^{5/6} (\TT)$, $\lim_{q \to \infty} \Pi_q (u) = 0$.
		In fact, $U \in {{B}^{\frac{3}{p}-\frac{2}{3}}_{p,\infty}} (\TT)$ with any $p \in (1, \frac92)$ and $U \in {\dot {B}^{\frac{3}{p}-\frac{2}{3}}_{p,\infty}} (\TT)$ with any $p \in (1, \infty]$.
	\end{teo}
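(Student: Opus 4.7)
The plan is to construct $U$ directly as a scaled sum of divergence-free wavelet-like building blocks placed at successive Littlewood--Paley shells, with amplitude and support volumes tuned to saturate the critical line $s = 3/p - 2/3$ simultaneously for all admissible $p$. First fix a smooth, compactly supported, divergence-free profile $W_0 \colon \Rd \to \Rd$ oscillating at unit frequency in some fixed direction, obtained for instance as the real part of a curl $\nabla \times (\psi(x) e^{i k_0 \cdot x} B)$ with $B \perp k_0$; by tuning $\psi, B, k_0$ we arrange $c_0 := \Pi_0(W_0) \ne 0$. Although $\int W_0 \otimes W_0 : \nabla W_0$ vanishes by incompressibility, the Littlewood--Paley truncation removes precisely the $2k_0$-frequency content of $W_0 \otimes W_0$, leaving a nonzero residue. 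For $q \ge 1$ and $\lambda_q = 2^q$, define
\[
W_q(x) := \lambda_q^{2/3}\, W_0\bigl(\lambda_q(x - z_q)\bigr),
\]
with center points $z_q \in \TT$ chosen so that the (shrinking) supports of volume $\simeq \lambda_q^{-3}$ are pairwise disjoint. With $\widehat\psi$ of sufficiently narrow support, each $W_q$ lies strictly inside the Littlewood--Paley shell at frequency $\lambda_q$. Finally set $U := \alpha \sum_{q \ge 1} W_q$ where $\alpha \in \RR$ is the real cube root of $c/c_0$.

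Regularity will then be immediate from scaling: since $\|W_q\|_{L^p} \simeq \lambda_q^{2/3 - 3/p}$ and the $W_q$ live in disjoint Littlewood--Paley shells, we obtain $\|P_{q'} U\|_{L^p} \lesssim \lambda_{q'}^{-(3/p - 2/3)}$ uniformly in $q'$, giving $U \in B^{3/p - 2/3}_{p,\infty}(\TT)$ for $p \in (1, 9/2)$ and $U \in \dot{B}^{3/p - 2/3}_{p,\infty}(\TT)$ for $p \in (1, \infty]$; the range $p < 9/2$ is dictated by the positivity of the smoothness index needed for the inhomogeneous space.

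The heart of the argument will be the flux estimate. Expanding trilinearly,
\[
\Pi_q(U) = \alpha^3 \sum_{j, k, \ell} \int_{\TT} S_q(W_j \otimes W_k) : \nabla S_q W_\ell \, dx.
\]
Pairwise disjoint spatial supports force $W_j \otimes W_k \equiv 0$ for $j \ne k$, collapsing the sum to diagonal $(j, j, \ell)$ indices. Frequency localization kills $S_q W_\ell$ for $\ell > q$, and for $\ell \le q$ it equals $W_\ell$; the integral $\int (W_j \otimes W_j) : \nabla W_\ell$ then vanishes whenever $j \ne \ell$ by disjoint supports, and for $j = \ell$ strictly below $q$ it vanishes because $S_q$ does not truncate $W_j \otimes W_j$ and the remaining integral is $\int W_j \otimes W_j : \nabla W_j = 0$ by incompressibility. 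Only $j = \ell = q$ survives, producing $\alpha^3 c_0 + o(1) = c + o(1)$, which yields \eqref{limsup-liminf}. The main obstacle will be making this support/frequency analysis fully rigorous: one must choose $\widehat\psi$ so that the Fourier supports of the wavelets sit strictly inside their intended shells, carefully track the residual interactions between adjacent shells (in particular the boundary case $(q-1, q-1, q)$), and quantify the tail errors from the Littlewood--Paley cutoff; with a sufficiently sharp Fourier cutoff these all become either identically zero or $o(1)$ as $q \to \infty$.
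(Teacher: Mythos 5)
The central mechanism in your proposal is missing, and there is also a basic incompatibility in the localization assumptions; the construction as written cannot work.

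\emph{The triad does not close.} Your profile $W_0$ oscillates in a single direction $k_0$, so $\widehat{W_0}$ is concentrated near $\pm k_0$, and after rescaling $\widehat{W_q}$ is concentrated near $\pm\lambda_q k_0$. The flux $\Pi_q(W_q)$ is, by Parseval, a sum over frequency triads $\xi_1+\xi_2+\xi_3=0$ with $\xi_1,\xi_2$ drawn from the Fourier support of $W_q$ and $\xi_3$ from that of $\nabla S_q W_q$. With all three $\xi_i$ near $\pm\lambda_q k_0$ the sum equals $\pm\lambda_q k_0$ or $\pm 3\lambda_q k_0$ up to the Fourier spread, never $0$, so the triad closes only through the Schwartz tails of $\widehat\psi$. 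Hence $c_0=\Pi_0(W_0)$ is not a free nonzero constant you can tune; it is a tail quantity that shrinks precisely as you sharpen the Fourier concentration, and your claim that the Littlewood--Paley truncation ``leaves a nonzero residue'' is unjustified. The paper sidesteps this by building $u_q$ from \emph{three} frequency vectors $F^{\Circled{1}},F^{\Circled{2}},F^{\Circled{3}}$ satisfying $F^{\Circled{1}}+F^{\Circled{2}}=F^{\Circled{3}}$, and by choosing the cutoff $\varphi$ so that $S_q$ retains $u_q^{\Circled{1}}$ but discards $u_q^{\Circled{2}},u_q^{\Circled{3}}$; the product $u_q^{\Circled{2}}\otimes u_q^{\Circled{3}}$ then lands exactly on the frequency of $\nabla u_q^{\Circled{1}}$, producing a flux of definite sign. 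Without such a resonant triad you have no leading-order flux at all.

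\emph{Spatial vs.\ frequency localization.} Your argument needs $W_q$ to be compactly supported in space (to kill $W_j\otimes W_k$ for $j\ne k$) \emph{and} to have Fourier support ``strictly inside'' a single Littlewood--Paley shell (to make $S_q W_\ell=W_\ell$ for $\ell\le q$ and to collapse to diagonal interactions). By the uncertainty principle these two requirements are incompatible: $W_0$ with support of size $1$ forces $\widehat{W_0}$ to have spread $\gtrsim 1$ around $\pm k_0$, hence $W_q$ spreads across a fixed number of shells. Consequently $S_q W_\ell\ne W_\ell$, and moreover the kernels of $S_q$ are not compactly supported, so $S_q(W_j\otimes W_j)$ and $\nabla S_q W_\ell$ overlap even when $W_j,W_\ell$ do not. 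Your cancellations (cross terms, lower-diagonal terms, $\ell>q$ terms) are therefore approximate, and their errors are not estimated; with a single $k_0$ they are of the \emph{same} small size as the purported main term $c_0$, so the argument collapses. The paper avoids this entirely by placing $\widehat{u_q}$ in \emph{exactly} specified lattice cubes $A_q^{\Circled{i}}$, giving up spatial localization altogether; disjointness of interactions is then obtained arithmetically (Propositions~\ref{prop:windmill}, \ref{prop:nowhere-near}) and the genuinely unavoidable nonlocal interactions are collected into a term $\pinonloc$ and shown to be $O(\epsilon\log\epsilon)$, a step your proposal has no analogue of.

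In short, your approach is conceptually different from the paper's (spatially disjoint wavelets vs.\ Fourier-disjoint plane-wave blurs), but as stated it lacks the resonant triad that drives the flux and relies on a frequency/space localization that cannot be simultaneously realized.
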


	\subsection{Remarks and comparison to previous results}
	\begin{enumerate}[(i)]
			\item The theorem is sharp concerning regularity of $U$. Indeed, for any $p<\infty$ the flux $\Pi_q(U)$ vanishes for $q\to \infty$ for every $U\in B_{2,r}^{5/6}$. This follows from embedding into $B_{3,r}^{1/3}$ and Theorem 3.3 of Cheskidov, Constantin, Friedlander, and Shvydkoy \cite{CCFS}.
			\item A proper limit seems possible with a more careful choice of parameters. We did not pursue this here, as the main objective was obtaining a positive $\liminf$ for critical spaces in three space dimensions.
		\item Our construction is strongly inspired by Cheskidov, Filho, Lopes, and Shvydkoy  \cite{CNLS16}, which in turn develops ideas of Eyink \cite{Eyink}. Regularity-wise, a two dimensional sharp result is sketched in \cite{CNLS16}. The vector field $\mathcal{U}$ described there enjoys $\limsup_{q \to \infty} \Pi_q(\mathcal{U})\neq 0$, while $\liminf_{q \to \infty} \Pi_q(\mathcal{U})=0$. In fact, \cite{CNLS16} points at Cheskidov and Shvydkoy \cite{CheShv10} for details of the construction  aimed at sharp regularity for $p<3$. A straightforward attempt to fill in those details may result in non-zero values appearing extremely sparsely in the sequence $\Pi_q(\mathcal{U})$, which means that 'typically' $\Pi_q(\mathcal{U})=0$. This
%		, or even unavoidable in the construction of  \cite{CNLS16} $\liminf_{q \to \infty} \Pi_q(\mathcal{U})=0$,
        is unsatisfactory from the perspective of energy cascade. Indeed, any $\Pi_q(\mathcal{U})\approx 0$ disrupts the transport of energy, which is predominantly local (from lower to slightly higher frequencies; this locality remains in accordance with turbulence literature). Hence there cannot be a continuous cascade related to $\mathcal{U}$. 
		\item Another approach towards anomalous dissipation is proposed by Cheskidov and Luo \cite{ChLuo21}: the authors construct almost fully intermittent vector fields in 3 or more dimensions, which are not stationary but instead use time as another degree of freedom. More precisely, the construction features a `time-delayed' energy cascade and the definition of energy flux involves averaging in time. This is a very interesting approach, especially towards possible blow-up scenarios, but it does not satisfy the energy cascade heuristics sketched above.
	\end{enumerate}
	
	\subsection{Organisation} In Section~\ref{sec:construction} we detail the construction of the vector field $U$ and gather its properties. In Section~\ref{sec:decomposition} we decompose the energy flux of $U$, which facilitates computations of the final Section~\ref{sec:fin}, where the proof of Theorem \ref{thm} is concluded.
	
	\section*{Acknowledgements}
	This research was supported by the ERC Grant Agreement No. 724298.
	The authors thank S. Modena and L. Sz\'ekelyhidi for useful discussions.

	\section{The construction}
	\label{sec:construction}
	Take a smooth function
	\[
	\varphi:[0,\infty) \to [0,1] ,\quad \varphi (\xi) = \begin{cases} &1 \quad  \xi \le \frac{\sqrt{5}}{2}+2\epsilon\\
	&0 \quad \xi \ge 2-4\epsilon,
	\end{cases}
	\]
	with $\epsilon$ small enough, to be fixed later.
	Let $\psi(\xi) = \varphi(2 \xi) - \varphi(\xi)$. The Littlewood-Paley low-frequency and dyadic projections are, respectively, $\widehat{S_q u}(k)\coloneqq \varphi (\lambda_{q}^{-1}|k|) \hat u (k)$ and $\widehat{\Delta_q u}(k)\coloneqq \psi (\lambda_{q}^{-1}|k|) \hat u (k)$, where $\lambda_{q}=2^q$. 
	
	\subsection{The planar construction}\label{ssec:planar}
In this section we construct the planar vector field $U_0$ along \cite{CNLS16}.  Fix the plane $P_0\coloneqq\{(x_1,x_2,x_3) \in \RR^3 | \; x_3=0\}$. Define the frequency vectors
	\[ F_0^{\Circled{1}} \coloneqq (0,1,0) ,\ F_0^{\Circled{2}} \coloneqq (2,0,0) ,\ F_0^{\Circled{3}} \coloneqq (2,1,0) \]
	and amplitude vectors
	\[  V_0^{\Circled{1}} \coloneqq (1,0,0) , \  V_0^{\Circled{2}} \coloneqq (0,1,0) , \  V_0^{\Circled{3}} \coloneqq (-1,2,0). \]
	The planar 'skeleton field' $S$ is the sum over $q \in 3 \NN$ of $s_q= s_q^{\Circled{1}}+s_q^{\Circled{2}}+s_q^{\Circled{3}}$, where
	\[
	\begin{aligned}
	s_q^{\Circled{1}} &\coloneqq \lambda_q^{-\frac{1}{3}} V_0^{\Circled{1}} \sin \left(\lambda_q F_0^{\Circled{1}} \cdot x\right), \\
	s_q^{\Circled{2}} &\coloneqq \lambda_q^{-\frac{1}{3}} V_0^{\Circled{2}}  \cos \left( \lambda_q F_0^{\Circled{2}} \cdot x\right), \\
	s_q^{\Circled{3}} &\coloneqq \lambda_q^{-\frac{1}{3}} V_0^{\Circled{3}}  \cos \left( \lambda_q F_0^{\Circled{3}} \cdot x\right) 
	\end{aligned}
	\]
	The skeleton field $S$ suffices to obtain the $\limsup$ result in $B^{\frac13}_{3,\infty} (\TT)$; in particular $\divv S = 0$ thanks to $ F_0^{\Circled{i}} \cdot V_0^{\Circled{i}} =0$.

	Since the Bernstein inequality implies the Sobolev inequality:
\[
	\| \Delta_q f\|_{L^3(\TT)} \le C \lambda^\frac12_q \| \Delta_q f\|_{L^2 (\TT)} \implies \|f\|_{ B^{\frac13}_{3,\infty} (\TT)} \le C \|f\|_{ B^{\frac56}_{2,\infty} (\TT)},
\]
constructing $U \in B^{\frac56}_{2,\infty}$ with a positive flux could be reached by replicating the interactions arising in $S$, while saturating the Bernstein inequality. 
This strategy is realised by a three-dimensional blurring of the skeleton frequencies, with the new amplitudes preserving solenoidality and being appropriately rescaled. To this end define the active regions (`blurs' of frequencies $\lambda_q F_0^{\Circled{i}}$ that are `active' in $S$) as follows:
	\begin{align*}
	A_{0,q}^{\Circled{1}} &\coloneqq \Zd \cap \lambda_{q} \left( F_0^{\Circled{1}} + [0,\epsilon]^3 \right), \\ 
	A_{0,q}^{\Circled{2}} &\coloneqq \Zd \cap \lambda_{q} \left( F_0^{\Circled{2}} + [0,\epsilon]^3 \right), \\ 
	A_{0,q}^{\Circled{3}} &\coloneqq \Zd \cap \lambda_{q} \left( F_0^{\Circled{3}} + [0,2\epsilon]^3 \right).
	\end{align*}
Define $\pi_{\xi}$ to be the orthogonal projection onto the plane $\xi^\bot$.
	Note that $ \pi_{\xi}(v) e^{i\xi\cdot x}$ is a divergence-free vector field and agrees with the classical Leray-Helmholtz projection of $v e^{i\xi\cdot x}$, since $\pi_\xi =  \id - \frac{\xi}{|\xi|}  \otimes \frac{\xi}{|\xi|} $.
	
	We can now define the components of $u_q=u_q^{\Circled{1}}+u_q^{\Circled{2}}+u_q^{\Circled{3}}$
	\begin{equation}\label{eq:u_q}
	\begin{aligned}
	u_q^{\Circled{1}} \coloneqq \epsilon^{-2} \lambda_q^{-\frac{7}{3}} \sum_{\xi\in A_{0,q}^{\Circled{1}}} \pi_{\xi} \left(  V_0^{\Circled{1}} \right) \sin \left( \xi\cdot x\right), \\
	u_q^{\Circled{2}} \coloneqq \epsilon^{-2} \lambda_q^{-\frac{7}{3}} \sum_{\xi\in A_{0,q}^{\Circled{2}}} \pi_{\xi} \left(  V_0^{\Circled{2}} \right) \cos \left( \xi\cdot x\right), \\
	u_q^{\Circled{3}} \coloneqq \epsilon^{-2} \lambda_q^{-\frac{7}{3}} \sum_{\xi\in A_{0,q}^{\Circled{3}}} \pi_{\xi} \left(  V_0^{\Circled{3}} \right) \cos \left( \xi\cdot x\right).
	\end{aligned}
	\end{equation}
	Finally, the (almost) planar vector field is $U_0\coloneqq \sum_{q\in3\NN} u_q$. It is real-valued and (weakly) divergence-free.
	\subsection{The rotations and the full field $U$}\label{ssec:full} 
The flux of $U_0$ is controlled only for $q\in3\NN$, thus merely a $\limsup$ result is possible. Straightforward 'condensation' $3\NN\to\NN$ results however in undesirable interactions. We overcome this problem by combining almost-planar fields at three distinct planes. Each of these three types of almost-planar fields is obtained by rotating the construction from Section~\ref{ssec:planar}.
	
	Fix the line $L\coloneqq \{x_1+2x_2=0=x_3\}$ within the plane $P_0$. Let $\rot$ denote the rotation around $L$ by $\frac{\pi}{3}$, which is given by the matrix
	\[ \rot = \frac{1}{10} \begin{bmatrix}
	9&-2&-\sqrt{15}\\-2&6&-2\sqrt{15}\\\sqrt{15}&2\sqrt{15}&5
	\end{bmatrix}\]
		The rotated components will be similar to $u_q$ from the previous section, but anchored instead at the plane $P_1=\rot P_0$ or $P_2=\rot^2 P_0$. More precisely: fix $q\in3\NN+1$, for $i=1,2,3$ define $V_1^{\Circled{i}} \coloneqq \rot V_0^{\Circled{i}}$, $F_1^{\Circled{i}} \coloneqq \rot F_0^{\Circled{i}}$ and the regions
	\begin{align*}
	A_{1,q}^{\Circled{1}} &\coloneqq \Zd \cap \lambda_{q} \left( F_1^{\Circled{1}} + [0,\epsilon]^3 \right), \\
	A_{1,q}^{\Circled{2}} &\coloneqq \Zd \cap \lambda_{q} \left( F_1^{\Circled{2}} + [0,\epsilon]^3 \right), \\
	A_{1,q}^{\Circled{3}} &\coloneqq \Zd \cap \lambda_{q} \left( F_1^{\Circled{3}} + [0,2\epsilon]^3 \right).
	\end{align*}
	(Note that we first rotate the skeleton frequencies and blur afterwards, since rotating the blurs $A_{0,q}^{\Circled{i}}$ results in non-integer values.) The components $u_q^{\Circled{i}}$ and $u_q$ are then defined as in Section~\ref{ssec:planar}, now with $V_1^{\Circled{i}}$ replacing $V_0^{\Circled{i}}$ and $A_{1,q}^{\Circled{3}}$ replacing $A_{0,q}^{\Circled{3}}$. We write $U_1 \coloneqq \sum_{q\in3\NN+1} u_q$. 
	
	The $\frac{2\pi}{3}$-rotated field $U_2\coloneqq \sum_{q\in3\NN+2} u_q$ is defined with $\rot $ in the definition of $V_1^{\Circled{i}}, F_1^{\Circled{i}}$ replaced by $\rot^2$ (yielding $V_2^{\Circled{i}}$, $F_2^{\Circled{i}}$, and $A_{2,q}^{\Circled{i}}$). Finally
	\[
	U=U_0+U_1+U_2.
	\]
For brevity, we will at times suppress the lower index $j$ in $A_{j,q}^{\Circled{i}}$ (denoting rotation), since it is unambiguously determined by $q\mod 3$. In other words
	\[
	U = \sum_{q} u_q = \sum_{q} u_q^{\Circled{1}}+u_q^{\Circled{2}}+u_q^{\Circled{3}}, \qquad \text{with} 
	\]
	\[
	\begin{aligned}
	u_q^{\Circled{1}} = \epsilon^{-2} \lambda_q^{-\frac{7}{3}} \sum_{\xi\in A_{q}^{\Circled{1}}} \pi_{\xi} \left(  V_\qm^{\Circled{1}} \right) \sin \left( \xi\cdot x\right) \\
	u_q^{\Circled{2}} = \epsilon^{-2} \lambda_q^{-\frac{7}{3}} \sum_{\xi\in A_{q}^{\Circled{2}}} \pi_{\xi} \left(  V_\qm^{\Circled{2}} \right) \cos \left( \xi\cdot x\right) \\
	u_q^{\Circled{3}} = \epsilon^{-2} \lambda_q^{-\frac{7}{3}} \sum_{\xi\in A_{q}^{\Circled{3}}} \pi_{\xi} \left(  V_\qm^{\Circled{3}} \right) \cos \left( \xi\cdot x\right).
	\end{aligned}
	\]
	\begin{figure}[h!]
		\caption{Active regions from three consecutive generations $q-1$, $q$, $q+1$ and support of $\nabla \varphi (\lambda_{q}^{-1} \cdot)$. Different colours of active regions indicate that in fact they are anchored at three different planes.}\label{fig:1}
		\centering
		\includegraphics[ width=10cm]{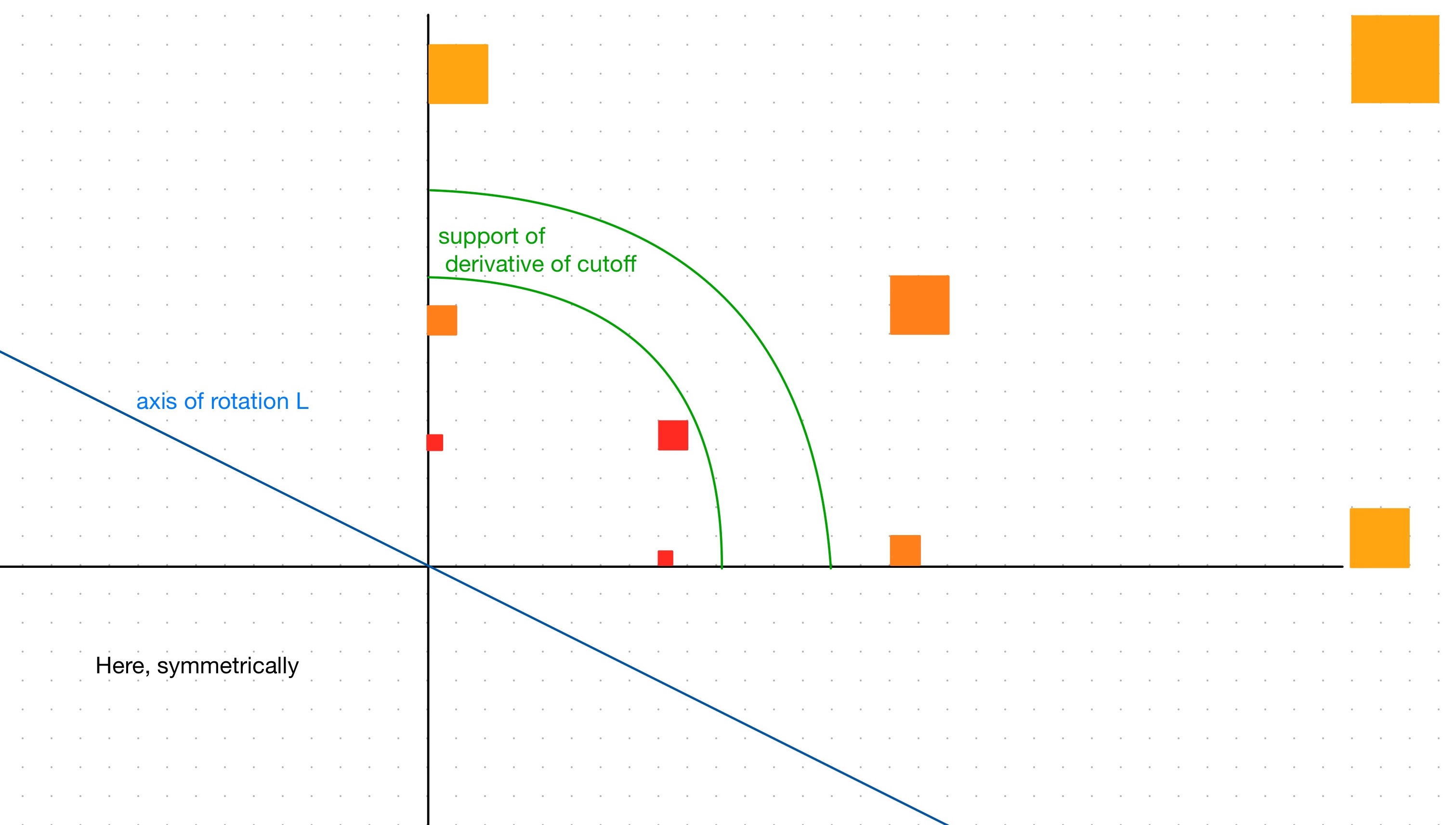}
	\end{figure}
	
	\subsection{Regularity of $U$}
	By $O(\epsilon)$ we will denote a {real} scalar, vector or tensor of magnitude $\lesssim \epsilon$, i.e.\ such that $|O(\epsilon)| \le C \epsilon$, with a uniform constant $C$ (i.e.\ independent from any parameters).
	
	For objects defined in Sections~\ref{ssec:planar},~\ref{ssec:full} we have
	\begin{prop}\label{prop:prop}
		For $j=0,1,2$ (rotations) and $i=1,2,3$ it holds
	\begin{equation}
		\pi_\xi \left( V_j^{\Circled{i}} \right) - V_j^{\Circled{i}} = O(\epsilon) \qquad  \forall \, \xi\in A_{j,q}^{\Circled{i}}, \label{projection}
	\end{equation}
	\begin{subequations}\label{blur-size}
	\begin{equation}
	|A_{j,q}^{\Circled{i}}| \in [(\epsilon\lambda_{q}-1)^3, (\epsilon\lambda_{q}+1)^3] \quad \text{ for } \quad i=1,2,
	\end{equation}	\begin{equation}
	|A_{j,q}^{\Circled{3}}| \in [(2\epsilon\lambda_{q}-1)^3, (2\epsilon\lambda_{q}+1)^3]. 
		\end{equation}
		\end{subequations}
	Moreover, for any $p \in (1, \infty]$
		\begin{align}\label{regularity_comp}
		\|u_q\|_{L^p} &\le C_p \epsilon^{1-\frac{3}{p}} \lambda_{q}^{\frac23- \frac{3}{p}}, \\
		\label{regularity_full}
		\|U\|_{\dot{B}^{\frac{3}{p}-\frac{2}{3}}_{p,\infty}} &\le C_p \epsilon^{1-\frac{3}{p}}.
		\end{align}
	\end{prop}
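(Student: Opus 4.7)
The plan is to verify the four claims in sequence, with (regularity\_comp) the central one.

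Claim (projection) uses $\pi_\xi=\Id-\xi\otimes\xi/|\xi|^2$ together with two facts: a direct check gives $V_0^{\Circled{i}}\cdot F_0^{\Circled{i}}=0$ for $i=1,2,3$, and since $\rot$ is orthogonal this orthogonality propagates to $V_j^{\Circled{i}}\cdot F_j^{\Circled{i}}=0$ for $j=1,2$; moreover $\xi\in A_{j,q}^{\Circled{i}}$ means $\xi=\lambda_q F_j^{\Circled{i}}+\mu$ with $|\mu|\lesssim\epsilon\lambda_q$, so $\xi\cdot V_j^{\Circled{i}}=O(\epsilon\lambda_q)$ while $|\xi|^2\gtrsim\lambda_q^2$, which gives $\pi_\xi V_j^{\Circled{i}}-V_j^{\Circled{i}}=O(\epsilon)$. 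Claim (blur-size) reduces to counting integer points in an axis-aligned cube of side $L\in\{\epsilon\lambda_q,2\epsilon\lambda_q\}$: each real interval $[c,c+L]$ contains between $L-1$ and $L+1$ integers, so the product of three coordinatewise counts lies in $[(L-1)^3,(L+1)^3]$.

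The heart of the proposition is (regularity\_comp). The key observation is that $A_q^{\Circled{i}}$, being $\Z^3$ intersected with an axis-aligned cube, is a Cartesian product $\prod_{\ell=1}^3\{a_\ell,\ldots,a_\ell+N_\ell\}$ with $N_\ell=\epsilon\lambda_q+O(1)$ (or $2\epsilon\lambda_q+O(1)$ for $i=3$). Using (projection), $u_q^{\Circled{i}}$ splits into a main term $\epsilon^{-2}\lambda_q^{-7/3}V_\qm^{\Circled{i}}\sum_{\xi\in A_q^{\Circled{i}}}\sin(\xi\cdot x)$ (or $\cos$) plus an $O(\epsilon)$-correction. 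Since
\[
\sum_{\xi\in A_q^{\Circled{i}}}e^{\pm i\xi\cdot x}=e^{\pm i(a_1,a_2,a_3)\cdot x}\prod_{\ell=1}^3\sum_{n=0}^{N_\ell}e^{\pm inx_\ell},
\]
the $L^p(\T^3)$-norm of the sine/cosine sum is, by Fubini and the classical Dirichlet-kernel estimate $\bigl\|\sum_{n=0}^{N}e^{in\cdot}\bigr\|_{L^p(\T)}\le C_pN^{1-1/p}$ (valid for every $p\in(1,\infty]$), bounded by $C_p(\epsilon\lambda_q)^{3(1-1/p)}$. Multiplying by the prefactor $\epsilon^{-2}\lambda_q^{-7/3}$ yields exactly $C_p\epsilon^{1-3/p}\lambda_q^{2/3-3/p}$; the $O(\epsilon)$-correction, handled analogously (its Fourier symbol is smooth in $\xi$ and of size $\epsilon$), contributes a term absorbed in the constant.

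For (regularity\_full) I localise in frequency. The cutoffs $\sqrt5/2+2\epsilon$ and $2-4\epsilon$ in the definition of $\varphi$ are chosen so that the three skeleton magnitudes $|F^{\Circled{i}}|\in\{1,2,\sqrt5\}$ together with their $\epsilon$-blurs land in distinct Littlewood--Paley shells: a direct computation shows that $\Delta_{q'}u_q^{\Circled{1}}$ reproduces (up to sign) $u_q^{\Circled{1}}$ when $q'=q$ and vanishes otherwise, while $\Delta_{q'}u_q^{\Circled{i}}$ for $i=2,3$ reproduces $u_q^{\Circled{i}}$ when $q'=q+1$ and vanishes otherwise. Hence $\Delta_{q'}U$ reduces to (a multiplier acting on) $u_{q'}^{\Circled{1}}+u_{q'-1}^{\Circled{2}}+u_{q'-1}^{\Circled{3}}$, and (regularity\_comp) together with $L^p$-boundedness of $\Delta_{q'}$ gives $\lambda_{q'}^{3/p-2/3}\|\Delta_{q'}U\|_{L^p}\le C_p\epsilon^{1-3/p}(1+2^{3/p-2/3})\le C_p\epsilon^{1-3/p}$, uniformly in $q'$. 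The main obstacle is the range $p\in(1,2)$ in (regularity\_comp): naive interpolation between $L^2$ (Parseval) and $L^\infty$ (direct counting of modes) covers only $p\ge 2$, and one genuinely needs the Cartesian-product structure of the blur together with the sharp one-dimensional Dirichlet bound $\|D_N\|_{L^p(\T)}\asymp N^{1-1/p}$ coordinatewise to reach $p<2$.
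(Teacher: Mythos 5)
Your proof is correct and follows essentially the same route as the paper: orthogonality of $V_j^{\Circled{i}}$ and $F_j^{\Circled{i}}$ plus the $O(\epsilon)$ size of the blur for \eqref{projection}, coordinatewise lattice counting for \eqref{blur-size}, the product Dirichlet-kernel estimate over the cube for \eqref{regularity_comp}, and frequency disjointness of the shells for \eqref{regularity_full}. The only (cosmetic) difference is that you peel off the constant amplitude via \eqref{projection} and treat the remainder as a small smooth multiplier, whereas the paper factors the whole sum through the Leray projector and invokes its $L^p$-boundedness; making your ``symbol smooth in $\xi$ and of size $\epsilon$'' remark rigorous amounts to exactly that Riesz-transform bound, so the two arguments coincide in substance.
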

	\begin{proof}
		From the construction in Sections~\ref{ssec:planar},~\ref{ssec:full} we have
		\begin{equation}
		\label{blur-is-small}
		\xi \in A_{j,q}^{\Circled{i}} \implies \frac{\xi}{|\xi|} = \frac{F_j^{\Circled{i}}}{| F_j^{\Circled{i}}|}+ O(\epsilon)
		\end{equation}
		and $F_j^{\Circled{i}} \perp V_j^{\Circled{i}}$. The projection $\pi_\xi =  \id - \frac{\xi}{|\xi|}  \otimes \frac{\xi}{|\xi|} $, therefore
		\[ \pi_\xi \left( V_j^{\Circled{i}} \right)
		= V_j^{\Circled{i}} - \frac{\xi}{|\xi|} \left(\frac{F_j^{\Circled{i}}}{| F_j^{\Circled{i}}|}+O(\epsilon)\right)\cdot V_j^{\Circled{i}} = V_j^{\Circled{i}} - \frac{\xi}{|\xi|} O(\epsilon) \]
		from which \eqref{projection} follows immediately. 
		
In any cube $x_0+[0,\epsilon\lambda_{q}]^3$, there are at most $\left( \lfloor{\epsilon\lambda_{q}} \rfloor  +1\right)^3\le (\epsilon\lambda_{q}+1)^3$ and at least $\left( \lfloor{\epsilon\lambda_{q}} \rfloor \right)^3 \ge (\epsilon\lambda_{q}-1)^3$ lattice points. This proves \eqref{blur-size} in the case $i=1,2$. For $i=3$ the only difference is that the cube has twice longer edges.
		
		Since $\pi_\xi$ agrees with the Leray-Helmholtz projector, we can use the $L^p$ theory for the latter, obtaining
		\[ \begin{aligned}
		\|u_q^{\Circled{1}} \|_p &\le   C_p \epsilon^{-2} \lambda_q^{-\frac{7}{3}} \| \sum_{\xi\in A_{q}^{\Circled{1}}} V_\qm^{\Circled{1}} \sin \left( \xi\cdot  \right) \|_p \\
		&\le  C_p \epsilon^{-2} \lambda_q^{-\frac{7}{3}} \| \sum_{\xi\in A_{q}^{\Circled{1}}} \sin \left( \xi\cdot \right) \|_p \\
		&\le C_p \epsilon^{-2} \lambda_q^{-\frac{7}{3}} (\epsilon \lambda_q)^{3(1-\frac{1}{p})},
		\end{aligned} \]
		where the last but one inequality holds, because $V_\qm^{\Circled{1}}$ is constant. The last inequality, since $A_{q}$ is a cube, follows along the lines of the Dirichlet kernel estimate. An analogous computation for $u_q^{\Circled{2}}$, $u_q^{\Circled{3}}$ yields  \eqref{regularity_comp} for $p \in (1, \infty)$.
		The case $p = \infty$ follows from the (non-optimal) straightforward bound $\|u_q\|_{L^\infty} \le \|\hat{u}_q\|_{L^\infty} \spt (\hat{u}_q)$ and application of \eqref{projection} and \eqref{blur-size}.
		
		The final regularity statement \eqref{regularity_full} follows from \eqref{regularity_comp} and the fact that by \eqref{spt-u_qj} below at most 3 components $u_q$ interfere with a single  Littlewood-Paley shell.
	\end{proof}
	
	\subsection{Fourier side of $U$} 
	For the sake of the next section, let us write $u_q$ in terms of Fourier modes, restricting our attention to the leading order amplitudes, provided by \eqref{projection}:
	\begin{equation}
	\label{uq-fourier}
	\begin{aligned}
	\widehat{u_q^{\Circled{1}}} &= -i\lambda_{q}^{-\frac{7}{3}} \epsilon^{-2} \left( V_{\qm}^{\Circled{1}}+O(\epsilon) \right) \left( \chi_{A_{q}^{\Circled{1}}} - \chi_{-A_{q}^{\Circled{1}}} \right), \\
	\widehat{u_q^{\Circled{2}}} &= \lambda_{q}^{-\frac{7}{3}} \epsilon^{-2} \left( V_{\qm}^{\Circled{2}}+O(\epsilon) \right) \left( \chi_{A_{q}^{\Circled{2}}} + \chi_{-A_{q}^{\Circled{2}}} \right), \\
	\widehat{u_q^{\Circled{3}}} &= \lambda_{q}^{-\frac{7}{3}} \epsilon^{-2} \left( V_{\qm}^{\Circled{3}}+O(\epsilon) \right) \left( \chi_{A_{q}^{\Circled{3}}} + \chi_{-A_{q}^{\Circled{3}}} \right),
	\end{aligned}
	\end{equation}
	where $\chi_A$ denotes the characteristic function of $A$. Differentiation yields
	\begin{equation}
	\begin{aligned}
	\widehat{\nabla u_q^{\Circled{1}}} &= -\lambda_q^{-\frac{4}{3}} \epsilon^{-2} \left( V_\qm^{\Circled{1}} \otimes F_\qm^{\Circled{1}} +O(\epsilon) \right) \left( \chi_{A_{q}^{\Circled{1}}} + \chi_{-A_{q}^{\Circled{1}}} \right) ,\\
	\widehat{\nabla u_q^{\Circled{2}}} &= i\lambda_q^{-\frac{4}{3}} \epsilon^{-2} \left( V_\qm^{\Circled{2}} \otimes F_\qm^{\Circled{2}} +O(\epsilon) \right) \left( \chi_{A_{q}^{\Circled{2}}} - \chi_{-A_{q}^{\Circled{2}}} \right) ,\\
	\widehat{\nabla u_q^{\Circled{3}}} &= i\lambda_q^{-\frac{4}{3}} \epsilon^{-2} \left( V_\qm^{\Circled{3}} \otimes F_\qm^{\Circled{3}} +O(\epsilon) \right) \left( \chi_{A_{q}^{\Circled{3}}} - \chi_{-A_{q}^{\Circled{3}}} \right) .
	\end{aligned}
	\label{uq-der-1}
	\end{equation}
	Observe that the amplitudes of $\nabla u_q^{\Circled{1}}$ are real while the other ones are purely imaginary.
	
	We will also need the following precise estimates about the shells in which the Fourier support of $U$ is contained:
		\begin{equation}
		\begin{aligned}
		\spt (\widehat{u_q^{\Circled{1}}}) &\subset \{ \lambda_{q} (1-\sqrt{3}\epsilon) \le |\xi| \le \lambda_{q} (1+\sqrt{3}\epsilon)\},\\
		\spt (\widehat{u_q^{\Circled{2}}}) &\subset \{ \lambda_{q} (2-\sqrt{3}\epsilon) \le |\xi| \le \lambda_{q} (2+\sqrt{3}\epsilon)\}, \\
		\spt (\widehat{u_q^{\Circled{3}}}) &\subset \{ \lambda_{q} (\sqrt{5}-2\sqrt{3}\epsilon) \le |\xi| \le \lambda_{q} (\sqrt{5}+2\sqrt{3}\epsilon)\}.
		\end{aligned}
		\label{spt-u_qj-region}    
		\end{equation}
		Hence%, conveniently rounded
		\begin{equation}
		\spt (\widehat{u_q}) \subset \{ \lambda_{q} (1-2\epsilon) < |\xi| < \lambda_{q} (\sqrt{5}+4\epsilon)\}.
		\label{spt-u_qj}    
		\end{equation}
Bounds \eqref{spt-u_qj-region} can be seen using Figure~\ref{fig:1}. Let us justify them more extensively. It holds
		$\spt (\widehat{u_q^{\Circled{i}}}) = \pm A_{q}^{\Circled{i}}$; in particular the supports are symmetric, so we can ignore the `negative' part in our computation. For  $i=1,2$, $A_{q}^{\Circled{i}} \subset Q^i:=\lambda_{q} ( F_\qm^{\Circled{i}} + [0,\epsilon]^3)$, while for $Q^3$ we replace $\epsilon$ with $2\epsilon$. The diameter of $Q^i$ equals $\sqrt{3} \lambda_q \epsilon$ for $i=1,2$ and $2\sqrt{3}\lambda_q \epsilon$ for $i=3$.
		Observe that 
		\[ |F_\qm^{\Circled{1}}|=1, \ |F_\qm^{\Circled{2}}|=2, \ |F_\qm^{\Circled{3}}|=\sqrt{5} \]
		irrespective of rotation. This, the fact that $\lambda_q F_\qm^{\Circled{i}} \in Q^i$ and the values of the diameters yield the generous estimate \eqref{spt-u_qj-region}.
		
			\section{Flux decomposition}\label{sec:decomposition}
	Let $v_1, v_2, v_3$ be any vector fields from $\Td$ to $\Cd$. Then, for $\xi_j \in \Zd$, $j=1,2,3$, via Parseval
	\begin{equation}\label{eq:fourier-flux}
		\begin{gathered}
	\int (v_1\otimes v_2) :\nabla v_3 
	=\sum_{\xi_1 + \xi_2 + \xi_3 = 0} \widehat{v_1}(\xi_1) \otimes \widehat{v_2}(\xi_2) : \widehat{\nabla v_3}(\xi_3).
	\end{gathered}
	\end{equation}
	Thus, any contribution to the integral necessitates $\xi_1+\xi_2+\xi_3=0$. This observation is an underlying principle for the following analysis of the flux. In particular, modes satisfying $\xi_1+\xi_2+\xi_3=0$ will be referred to as 'interacting' (i.e.\ contributing to the flux), exhibiting 'non-zero interactions' etc.\

	The aim of this section is to split the flux $\Pi_q (U)$ into a local part $\piloc$, which is restricted to interactions within a single component $u_q$, and a non-local part $\pinonloc$, which involves modes further apart. 
	We will show that $\piloc$ behaves precisely like the flux of the skeleton field $S$. However, unlike in the $\Pi_q (S)$ case, $\pinonloc$ will contribute to the flux. Indeed, a blur of a very high frequency interacts with its almost-symmetric counterpart, producing a much lower non-zero mode, which in turn may interact with a low-frequency mode. 
	
	Recall that $|O(\epsilon)| \le C \epsilon$ with a uniform $C$. We will from now onwards tacitly assume that $\epsilon \le \epsilon_0$, where $C \epsilon_0 \le \frac{1}{C}$. Consequently, $1 - |O(\epsilon)| \ge 1- \frac{1}{C}$. Even though $O(\epsilon)$ is unsigned and even not necessarily a real number, we will write at times $1-O(\epsilon)$, meaning $1-|O(\epsilon)|$. Denoting $a \odot b:= a \otimes b + b \otimes a$, we state
	\begin{lem}[Flux decomposition]
		\label{prop:decomp}
		The energy flux $\Pi_{q} (U)= \piloc + \pinonloc$, where
		\begin{equation}
		\label{Pi^local2}
		\piloc =  \int u_{q}^{\Circled{2}} \odot u_{q}^{\Circled{3}} : \nabla u_{q}^{\Circled{1}}
		\end{equation}
and 
		\begin{equation}\label{Pi^nonloc2}
		\pinonloc = \sum_{i=1,2,3} \sum_{l\le q}\sum_{k>max(q, l+ N_\epsilon)} \int u_k^{\Circled{i}} \otimes u_k^{\Circled{i}} : \nabla u_l^{\Circled{1}}
		\end{equation}
with $N_\epsilon = \lfloor 3- \log\epsilon \rfloor$. 
	\end{lem}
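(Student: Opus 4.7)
The plan is to work entirely in Fourier space: first pin down $S_q U$, then expand the flux into resonant triples $\xi_1+\xi_2+\xi_3=0$ and classify the surviving interactions. Using the sharp support bounds \eqref{spt-u_qj-region} combined with the profile of $\varphi$, one checks that $S_q u_k=u_k$ for all $k<q$ (their highest frequency $\sqrt 5\lambda_{q-1}$ lies in $\{\varphi=1\}$), $S_q u_q=u_q^{\Circled 1}$ (only this component lies below the cutoff $\lambda_q(2-4\epsilon)$), and $S_q u_k=0$ for $k>q$. Hence $S_q U=u_q^{\Circled 1}+\sum_{k<q}u_k$, and by Parseval \eqref{eq:fourier-flux} the flux reduces to
\[
\Pi_q(U)=\!\!\sum_{\substack{k_1,k_2,k_3\\ i_1,i_2,i_3}}\!\!\int S_q\bigl(u_{k_1}^{\Circled{i_1}}\otimes u_{k_2}^{\Circled{i_2}}\bigr):\nabla S_q u_{k_3}^{\Circled{i_3}},
\]
a nonzero summand requiring a triple of Fourier modes $\xi_j$ near $\pm\lambda_{k_j}F_{\qm_j}^{\Circled{i_j}}$ (up to blur of size $\lambda_{k_j}\cdot O(\epsilon)$) with $\xi_1+\xi_2+\xi_3=0$. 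A case analysis will show that only two regimes survive: \emph{(a) same-level} $k_1=k_2=k_3=k$, where by $F^{\Circled 3}=F^{\Circled 1}+F^{\Circled 2}$ (preserved under each $\rot^j$) and the smallness of $\epsilon$, $(i_1,i_2,i_3)$ must be a permutation of $(1,2,3)$; \emph{(b) non-local} $k_1=k_2=k>k_3=l$ with $k>q$, where $\xi_1,\xi_2$ come from opposite halves of a single active region so that $i_1=i_2$, leaving a residual $\xi_1+\xi_2$ of size $\lesssim\lambda_k\epsilon$ to match $-\xi_3$.

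For the same-level regime at $k<q$, every $S_q$ is the identity on the resonant modes (all of magnitude $\le\sqrt 5\lambda_{q-1}$), so summing the six permutations of $(1,2,3)$ yields $\int u_k\otimes u_k:\nabla u_k=0$, vanishing because each $u_k^{\Circled i}$ is divergence-free by construction via $\pi_\xi$ (see \eqref{eq:u_q}) and hence so is $u_k$. For $k=q$ only $i_3=1$ survives $S_q$, leaving just the permutations $(2,3,1)$ and $(3,2,1)$; these combine into $\int u_q^{\Circled 2}\odot u_q^{\Circled 3}:\nabla u_q^{\Circled 1}=\piloc$, with $S_q$ again acting as the identity on the resonant modes of magnitude $\lambda_q$.

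In the non-local regime (b) a $\xi\mapsto-\xi$ parity argument based on \eqref{uq-fourier}--\eqref{uq-der-1} — the amplitudes of $\widehat{u_k^{\Circled 1}}$ are imaginary-odd and of $\widehat{u_k^{\Circled{2}}},\widehat{u_k^{\Circled{3}}}$ real-even, so $\widehat{u_k^{\Circled i}\otimes u_k^{\Circled i}}$ is real-even while $\widehat{\nabla u_l^{\Circled{i_3}}}$ is real-even only for $i_3=1$ — forces $i_3=1$ and yields exactly the terms of $\pinonloc$. The range $k>\max(q,l+N_\epsilon)$ encodes both $k>q$ (staying outside $S_qU$) and the blur condition $k-l\ge N_\epsilon=\lfloor 3-\log\epsilon\rfloor$, a safe form of $\lambda_k\epsilon\gtrsim\lambda_l|F^{\Circled 1}|$ ensuring the residual reaches $\spt\widehat{u_l^{\Circled 1}}$. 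The main obstacle will be the classification step: ruling out the array of potential rotation-mixing and cross-level resonances outside regimes (a), (b) relies delicately on the non-collinearity of each $F_j^{\Circled{i}}$ with the rotation axis $L=P_0\cap P_1\cap P_2$ and on the integer structure of the skeleton vectors being incompatible with the residual blur for small $\epsilon$.
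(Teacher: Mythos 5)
Your overall strategy --- identifying $S_qU=u_q^{\Circled{1}}+\sum_{k<q}u_k$ from \eqref{spt-u_qj-region}, expanding the flux into resonant triples via Parseval, and classifying interactions geometrically --- is close in spirit to the paper's, and several individual steps (the parity argument forcing $i_3=1$ in the non-local terms, the reduction of the same-level $k=q$ interaction to $\piloc$ via $F^{\Circled{3}}=F^{\Circled{1}}+F^{\Circled{2}}$) coincide with it. But there is a genuine gap in your classification, and it is not the geometric one you flag as ``the main obstacle''.

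Your claim that only regimes (a) and (b) survive is false as a statement about which summands of the trilinear expansion are nonzero. For any $l+N_\epsilon\le k\le q$ the triple $\xi_1\in A_k^{\Circled{i}}$, $\xi_2\in-A_k^{\Circled{i}}$, $\xi_3=-(\xi_1+\xi_2)\in\spt\widehat{u_l^{\Circled{1}}}$ is resonant, and since $|\xi_1+\xi_2|=|\xi_3|\lesssim\epsilon\lambda_k$ lies far below the cutoff, $S_q$ does not remove it from the product $S_q\bigl(u_k^{\Circled{i}}\otimes u_k^{\Circled{i}}\bigr)$ even when $k=q$ and $i\in\{2,3\}$. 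The resulting integrals $\int u_k^{\Circled{i}}\otimes u_k^{\Circled{i}}:\nabla u_l^{\Circled{1}}$ with $k\le q$ are individually nonzero (they are exactly of the type estimated in the $\pinonloc$ section of the paper), as are the transposed arrangements $\int S_q(u_l\otimes u_k):\nabla S_qu_k$ with $k\le q$, where the gradient falls on the high-frequency factor. None of these appear in \eqref{Pi^local2}--\eqref{Pi^nonloc2}, yet nothing in your outline disposes of them: they are not excluded by the geometry, only by cancellation. Indeed $\int u_k\otimes u_k:\nabla u_l+\int u_l\otimes u_k:\nabla u_k=\int u_k^j\,\partial_j(u_k\cdot u_l)\,dx=0$ since $\divv u_k=0$, and for $k\le q$ both arrangements survive $S_q$ and cancel, whereas for $k>q$ the second arrangement is annihilated by $\nabla S_qu_k=0$ --- this asymmetry is precisely why the constraint $k>q$ (and not merely $k>l+N_\epsilon$) appears in $\pinonloc$. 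The paper performs this cancellation wholesale before any mode counting, by subtracting the integration-by-parts identity $\int(U_{<q}+u_q^{\Circled{1}})\otimes U:\nabla(U_{<q}+u_q^{\Circled{1}})=0$ (its equation \eqref{eq:van}), which confines the first tensor slot to $u_q^{\Circled{2}}+u_q^{\Circled{3}}+U_{>q}$ from the outset. Your use of divergence-freeness --- summing the six permutations within a single level $k<q$ --- captures only the diagonal piece of this cancellation and misses every cross-level piece with both levels at or below $q$. To repair the argument you must either import this antisymmetrization step or carry out the pairwise cross-level cancellations explicitly.
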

	The remainder of this section contains proof of Lemma~\ref{prop:decomp}.
	\subsection{Initial splitting of $\Pi_q (U)$}
	Let us define for $U = \sum_{q} u_q$ of Section~\ref{ssec:full}
	\[ U_{<q} \coloneqq \sum_{l<q} u_{l} \qquad \text{ and analogously: } \quad U_{>q},\ U_{\le q},\ U_{\ge q}. \]
	The splitting $ U = U_{<q} + u_{q} + U_{>q}$ is usually called Bony decomposition.
	By \eqref{spt-u_qj-region} and the choice of the cutoff function $\varphi$, implying there are no active modes of $U$ in the support of  derivative of $\varphi$, we have (cf.\ Figure \ref{fig:1})
	\begin{equation}\label{eq:io} S_{q}(U) =   U_{<q}+ u_q^{\Circled{1}} \quad \text{ and } \quad U- S_{q}(U) =   u_q^{\Circled{2}} + u_q^{\Circled{3}} +  U_{>q}.  \end{equation}
Therefore
	\begin{equation}\label{no-cutoff}
	\begin{aligned}
	\Pi_{q}(U) &= \int S_{q}(U\otimes U) : \nabla S_{q}(U) =  \int  S_{q}(u \otimes U) : \nabla (U_{<q}+ u_q^{\Circled{1}} ) \\
	&=  \int (U\otimes U) : \nabla (U_{<q}+ u_q^{\Circled{1}} ).
	\end{aligned}
	\end{equation}
	$U$ is divergence-free and integrable, while $U_{<q}+ u_q^{\Circled{1}}$ is smooth, thus integrating by parts
	\begin{equation}\label{eq:van}
	\int (U_{<q} + u_q^{\Circled{1}}) \otimes U : \nabla (U_{<q} + u_q^{\Circled{1}}) =0.
	\end{equation}
	Subtracting  \eqref{eq:van} from \eqref{no-cutoff} we obtain
	\begin{align}
	\Pi_{q}(U) =\int  \left(u_q^{\Circled{2}} + u_q^{\Circled{3}} + U_{>q} \right) \otimes U : \nabla \left(U_{<q}+ u_q^{\Circled{1}}  \right).
	\label{symmetry-Piqj}
	\end{align}
	We split now $\Pi_{q}(U)$ of \eqref{symmetry-Piqj} into the part $\pinonloc$ where modes are separated and the remainder $\piloc$ as follows:
	
	\begin{equation}
	\label{Pi^local}
	\piloc = \int  (u_q^{\Circled{2}} + u_q^{\Circled{3}} + u_{q+1}) \otimes (u_{q-1}+ u_{q} + u_{q+1}) : \nabla (u_{q-1}+ u_q^{\Circled{1}}),
	\end{equation}
	\begin{equation} \label{Pi^nonloc}
	\begin{aligned}
	\pinonloc &= \int U_{> q+1}  \otimes U : \nabla \left(U_{<q}+ u_q^{\Circled{1}} \right) \\
	&+ \int  (u_q^{\Circled{2}} + u_q^{\Circled{3}} + u_{q+1} ) \otimes U : \nabla U_{< q-1} \\
	&+ \int  (u_q^{\Circled{2}} + u_q^{\Circled{3}} + u_{q+1} ) \otimes (U_{< q-1}
	+U_{> q+1}) : \nabla (u_{q-1}+ u_q^{\Circled{1}}).
	\end{aligned}
	\end{equation}
	We need to show that \eqref{Pi^local} is in fact \eqref{Pi^local2}, and that \eqref{Pi^nonloc} is in fact \eqref{Pi^nonloc2}.
	\subsection{Exclusion of three different planes interacting}		
	We will say that a point $\xi$ is anchored to a set $\Omega$ if $dist(\xi, \Omega) \le 2\epsilon|\xi|$.  We claim that each frequency $\xi$ of $\widehat U$ is unambiguously anchored at one of three different planes $P_0$, $P_1$ or $P_2$, thanks to $\epsilon \le \epsilon_0$ small in our definition of active frequency regions $A_{q}^{\Circled{i}}$. 
	
	Indeed, $\lambda_q F_\qm^{\Circled{i}} \in P_\qm$, so $dist(\xi,  P_\qm) \le |\xi-\lambda_q F_\qm^{\Circled{i}}|$. At the same time, if $\xi\in A_q^{\Circled{i}}$ then by construction $\xi$ belongs to a cube with a vertex $\lambda_q F_\qm^{\Circled{i}}$ and with its side length $\epsilon \lambda_q $ or $2 \epsilon \lambda_q $. This reads for $\xi\in A_q^{\Circled{i}}$ with $i=1,2$
	\[
dist(\xi,  P_\qm) \le	|\xi-\lambda_q F_\qm^{\Circled{i}}| \le \sqrt{3}\epsilon\lambda_q \le  \frac{\sqrt{3}\epsilon}{1-2\epsilon} |\xi|,
		\]
	where the last inequality stems from  \eqref{spt-u_qj}. Hence for $i=1,2$, $\xi\in A_q^{\Circled{i}}$ are anchored to $P_\qm$, provided $\frac{\sqrt{3}}{1-2\epsilon_0} \le 2$. Similarly for $\xi\in A_q^{\Circled{3}}$, because
    \[ dist(\xi,P_\qm) \le |\xi-\lambda_q F_\qm^{\Circled{3}}| \le 2\sqrt{3}\epsilon \lambda_q \le \frac{2\sqrt{3}\epsilon}{\sqrt{5}-2\sqrt{3}\epsilon} |\xi|, \]
 applying now \eqref{spt-u_qj-region} and again an uniform upper bound on $\epsilon_0$.
 
 Since $\epsilon$ is small, we see that the anchoring is unambiguous, as claimed.

	Our next goal is to exclude any interactions between modes related to three different planes $P$.
	\begin{prop}\label{prop:windmill}
		If each $\spt (\widehat{{u}_{q'}})$, $\spt (\widehat{{u}_{q''}})$, $\spt (\widehat{{u}_{q'''}})$ is anchored at a different plane $P$, then
		\begin{equation}\label{eq:no3}
		\int  u_{q'} \otimes u_{q''} : \nabla u_{q'''} =0.
		\end{equation}
	\end{prop}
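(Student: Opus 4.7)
The plan is to use the Fourier-side identity \eqref{eq:fourier-flux}: any non-zero contribution to $\int u_{q'}\otimes u_{q''}:\nabla u_{q'''}$ requires a triple $(\xi_1,\xi_2,\xi_3)$ in $\spt(\widehat{u_{q'}})\times\spt(\widehat{u_{q''}})\times\spt(\widehat{u_{q'''}})$ with $\xi_1+\xi_2+\xi_3=0$. My task is therefore to rule out any such triple once the three Fourier supports anchor at three distinct planes; for brevity I write $q_1=q'$, $q_2=q''$, $q_3=q'''$.

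The key geometric fact is that $P_0,P_1,P_2$ all contain the line $L$, so I would work in the splitting $\Rd=L\oplus L^\perp$. Pick $e_L\coloneqq(2,-1,0)/\sqrt5$ and, inside $L^\perp$, the orthonormal pair $(v_0,n_0)$ with $v_0\coloneqq(1,2,0)/\sqrt5\in P_0\cap L^\perp$ and $n_0\coloneqq(0,0,1)$. Setting $v_j\coloneqq\rot^j v_0$ produces three unit vectors in $L^\perp$ at $60^\circ$ apart, satisfying the unique (up to scaling) linear relation $v_0-v_1+v_2=0$. A direct computation then gives
\[
F_j^{\Circled{i}}=\alpha_i e_L+\beta_i v_j,\qquad (\alpha_1,\alpha_2,\alpha_3)=\tfrac{1}{\sqrt5}(-1,4,3),\qquad (\beta_1,\beta_2,\beta_3)=\tfrac{1}{\sqrt5}(2,2,4).
\]
By the construction of the active regions, each admissible $\xi_i$ takes the form $\sigma_i\lambda_{q_i}F_{j_i}^{\Circled{k_i}}+r_i$ with $\sigma_i\in\{\pm1\}$, $k_i\in\{1,2,3\}$ and $|r_i|\le 2\sqrt3\,\epsilon\lambda_{q_i}$; the hypothesis reads $\{j_1,j_2,j_3\}=\{0,1,2\}$.

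Projecting the vanishing sum $\xi_1+\xi_2+\xi_3=0$ onto $L^\perp$ and expanding in the basis $(v_0,n_0)$ (using $v_1=(\tfrac12,\tfrac{\sqrt3}{2})$, $v_2=(-\tfrac12,\tfrac{\sqrt3}{2})$ after a relabeling of the $j_i$), I obtain two scalar equations whose elimination---driven by the unique relation $v_0-v_1+v_2=0$---forces the quantities $m_i\coloneqq\sigma_i\lambda_{q_i}\beta_{k_i}$ to satisfy $|m_1|=|m_2|=|m_3|$ up to an error $O(\epsilon\lambda_{\max})$, with $\lambda_{\max}=\max_i\lambda_{q_i}$. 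Since $\beta_{k_i}\in\{2/\sqrt5,4/\sqrt5\}$, taking WLOG $q_1\le q_2\le q_3$ and combining $|m_1|\le 4\lambda_{q_1}/\sqrt5$ with $|m_3|\ge 2\lambda_{q_3}/\sqrt5$ yields $\lambda_{q_3}/\lambda_{q_1}<4$ once $\epsilon_0$ is small enough, i.e.\ $q_3-q_1\le 1$.

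But the hypothesis of three distinct anchoring planes forces the three integers $q_1,q_2,q_3$ to have three distinct residues modulo $3$, which in turn forces $q_3-q_1\ge 2$: contradiction. The main technical point is the careful propagation of the $O(\epsilon)$ blurring slack through the magnitude comparison so that the \emph{integer} inequality $q_3-q_1\le 1$ is genuinely enforced; this is resolved by the same uniform smallness assumption on $\epsilon_0$ already in place in the preceding subsection.
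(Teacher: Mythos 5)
Your proof is correct. It shares the paper's starting point (project the resonance condition $\xi_1+\xi_2+\xi_3=0$ onto $L^\perp$ and exploit the $\pi/3$ geometry among the lines $l_0,l_1,l_2$), but the execution is genuinely different. The paper runs a case analysis on the magnitudes $|\pi_{L^\perp}(\xi_i)|$: it orders them, observes each is $\tilde\lambda_q$ or $2\tilde\lambda_q$ up to $O(\epsilon)$, and successively eliminates configurations by triangle inequalities that use the $\pi/3$ spread (e.g.\ $|\pi_{L^\perp}(\xi_2)+\pi_{L^\perp}(\xi_3)|\le \tfrac{\sqrt3}{2}\tilde\lambda_{q_0}(1+O(\epsilon))$ when both sit on lines $\pi/3$ apart). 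You instead make the geometry algebraic: you diagonalise the situation by writing $F_j^{\Circled{i}}=\alpha_i e_L+\beta_i v_j$ and observe that in $L^\perp$ the only linear relation among the three unit directions is (a permutation of) $v_0-v_1+v_2=0$, so the $2\times 3$ system $\sum_i m_i v_{j_i}=O(\epsilon\lambda_{\max})$ pins down $(m_1,m_2,m_3)$ to within $O(\epsilon\lambda_{\max})$ of a multiple of $(1,-1,1)$. The resulting near-equality $|m_1|\approx|m_2|\approx|m_3|$, combined with the bounded ratio $\beta_{k_i}\in\{2/\sqrt5,4/\sqrt5\}$, gives $q_3-q_1\le 1$, which contradicts the distinct-residue condition implicit in "three distinct anchoring planes." What the paper's route buys is that it never needs to write the rotation in coordinates and stays purely metric/angular; what your route buys is a single clean elimination in place of a multi-case argument, and it makes explicit that the obstruction is arithmetic ($q_3-q_1\le 1$ versus $\ge 2$). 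One small caution in the write-up: the error you carry is $O(\epsilon\lambda_{q_3})$ while the quantities you compare are $O(\lambda_{q_1})$ and $O(\lambda_{q_3})$; since you only need $\lambda_{q_3}/\lambda_{q_1}<4$, the slack is harmless, but it is worth stating the constant in $|m^\perp|\lesssim|\text{error}|$ is uniform (it is: the nonzero singular values of the $2\times 3$ matrix $[v_0\,v_1\,v_2]$ are both $\sqrt{3/2}$).
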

	\begin{proof}
		Every $P$ stems from a rotation around axis $L$, whose normal plane we call $L^\perp$. Let us denote the orthogonal projection onto $L^\perp$ by $\pi_{L^\perp}$. For modes $\xi_j \in \spt (\widehat{{u}_{q_j}})$ with $j=1,2,3$ to interact, it is necessary that $\pi_{L^\perp} (\xi_i)$ do interact, i.e.\ that 
		\begin{equation}\label{eq:perpint}
		\pi_{L^\perp} (\xi_1) + \pi_{L^\perp} (\xi_2) + \pi_{L^\perp} (\xi_3) = 0.
		\end{equation}
		For the following considerations, it may be beneficial to visualise $U$ projected onto $L^\perp$, i.e. Figure \ref{fig:1} seen along the axis $L$, see Figure \ref{fig:2}. 
		\begin{figure}[h]
			\caption{Windmill: dots indicate (approximately) active regions projected on the plane of rotation $L^\perp$.}\label{fig:2}
			\centering
			\includegraphics[ width=8cm]{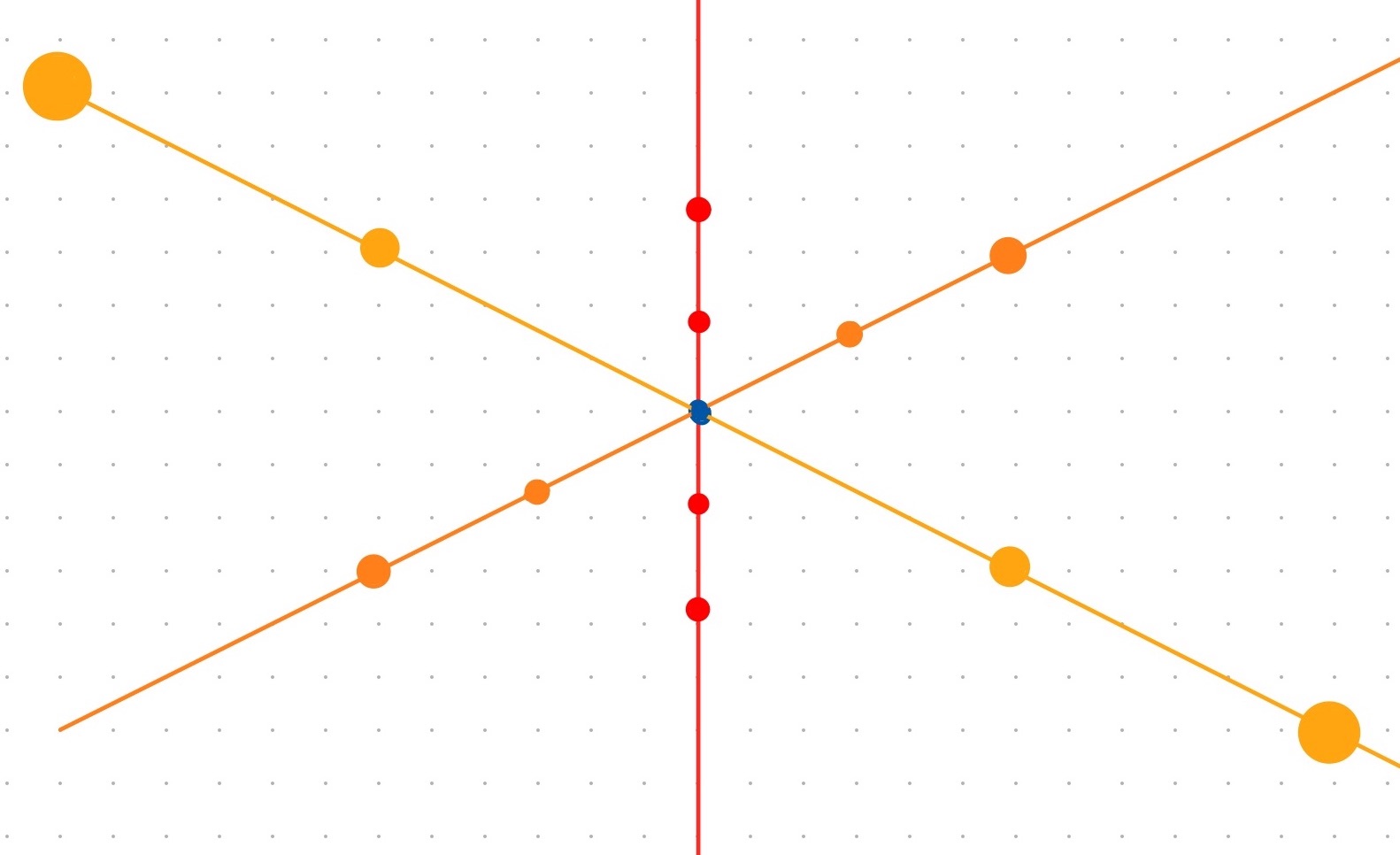}
		\end{figure}

Observe that $\pi_{L^\perp}$ projections of modes of $u_q$ have norms equal either $\frac{2\sqrt5}{5} \lambda_q (1+ O(\epsilon)) =:\tilde \lambda_q (1+O(\epsilon))$ or $2 \tilde \lambda_q (1+O(\epsilon))$.

We assumed that each mode $\xi_i$ is anchored at a different $P$ plane. Consequently, each $\pi_{L^\perp} (\xi_i)$ is anchored to a different line $l_i$ which is $\pi_{L^\perp}$ projection of a plane $P$.  Take the mode with the largest norm, say $\xi_1$ of $\widehat{u_{q_0}}$, anchored at $l_1$. We have 
		\begin{subequations}
			\begin{equation}\label{eq:x1}
			\text{either}\quad |\pi_{L^\perp} (\xi_1)| =\tilde \lambda_{q_0}(1 + O(\epsilon)),
			\end{equation}    
			\begin{equation}\label{eq:x1l}
			\text{or}\qquad |\pi_{L^\perp} (\xi_1)| =2\tilde \lambda_{q_0}(1 + O(\epsilon)).
			\end{equation}
		\end{subequations}
		The remaining two active $\pi_{L^\perp} (\xi_i)$'s, $i=2,3$  are anchored, respectively, at different lines $l_2$, $l_3$.
		Therefore we have
		\begin{subequations}\label{eq:windmill}
			\begin{equation}\label{eq:windmill2}
			|\pi_{L^\perp} (\xi_2)| =  2^{-j}  \tilde\lambda_{q_0}(1+O(\epsilon)) \quad \text{ for a } j \ge 0 
			\end{equation}    
			\begin{equation}\label{eq:windmill3}
			|\pi_{L^\perp} (\xi_3)| =  2^{-k-1}\tilde \lambda_{q_0}(1+O(\epsilon)) \quad \text{ for a } k \ge 0.
			\end{equation}
		\end{subequations}
		The first consequence of \eqref{eq:windmill} is that \eqref{eq:x1l} cannot happen. Indeed, using \eqref{eq:perpint} we have 
		\begin{equation}\label{eq:windmillex1}
		|\pi_{L^\perp} (\xi_1)| = |\pi_{L^\perp} (\xi_2) + \pi_{L^\perp} (\xi_3)| %\le |\pi_{L^\perp} (\xi_2)| + |\pi_{L^\perp} (\xi_3)| 
		\le \tilde\lambda_{q_0}(1+O(\epsilon)) + 2^{-1} \tilde\lambda_{q_0}(1+O(\epsilon)) <  \frac32 \tilde\lambda_{q_0}(1+O(\epsilon)).
		\end{equation}
		Next, knowing that  \eqref{eq:x1} is the only possibility, we argue that  \eqref{eq:x1}  requires $j=1$ in \eqref{eq:windmill2}. Indeed, $j<1$ is immediately excluded analogously to \eqref{eq:windmillex1}. Take case  $j=0$ in \eqref{eq:windmill2}. The involved projections of modes $\pi_{L^\perp} (\xi_1)$, $\pi_{L^\perp} (\xi_2)$ are anchored at different lines $l_1$, $l_2$, with angle  $\pi/3$ between them. This spreading means that, by adding vectors we have \[|\pi_{L^\perp} (\xi_1) +\pi_{L^\perp} (\xi_2)| \ge \lambda_{q_0}(1-O(\epsilon)),\]
		but then \eqref{eq:windmill3} makes \eqref{eq:perpint} impossible. (Recall we write $1-O(\epsilon)$ for $1-|O(\epsilon)|$.)

		Finally, having both \eqref{eq:x1} and  \eqref{eq:windmill2} with $j=1$, we see that $k>0$ in \eqref{eq:windmill3} is now excluded analogously to \eqref{eq:windmillex1}.
		
		Altogether, \eqref{eq:perpint}
	may occur only if
		\[
		|\pi_{L^\perp} (\xi_1)| =\tilde \lambda_{q_0}(1+O(\epsilon)), \  |\pi_{L^\perp} (\xi_2)|= \frac{\tilde \lambda_{q_0}}{2} (1+O(\epsilon)), \  |\pi_{L^\perp} (\xi_3)| = \frac{\tilde \lambda_{q_0}}{2} (1+O(\epsilon)),
		\]
		but due to $\pi/3$ angle between $l_2$, $l_3$,  $|\pi_{L^\perp} (\xi_2) + \pi_{L^\perp} (\xi_3)| \le \frac{\sqrt{3}}{2} \tilde \lambda_{q_0} (1+ O(\epsilon))<\tilde \lambda_{q_0}(1 -O(\epsilon))$, so \eqref{eq:perpint} is impossible.
	\end{proof}
	
	\subsection{Exclusion of near-field interactions}
	It turns out that active modes which are not all from the same $\widehat{u_q}$ may interact only in the following scenario: the two higher-frequency modes are from the same $\widehat{u_q}$, while the third mode is of  considerably lower frequency.
	More precisely, we have
	\begin{prop}\label{prop:nowhere-near}
	Let $q_1\le q_2\le q_3$. Assume that $u_{q_j}$'s interact, i.e.~there are $\xi_1, \xi_2, \xi_3$ where $\xi_j\in \spt \left( \widehat{u_{q_j}} \right)$ for $j=1,2,3$ such that
	\[ \xi_1+\xi_2+\xi_3 = 0. \]
	Suppose $q_1<q_3$, then $q_1+3\le q_2=q_3$.
	\end{prop}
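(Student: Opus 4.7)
\emph{Plan.} I would prove (a) $q_2=q_3$ and (b) $q_1+3\le q_2$ separately. Writing each interacting mode as
\[
\xi_j=\sigma_j\lambda_{q_j}F^{\Circled{i_j}}_{q_j\bmod 3}+\eta_j,\qquad \sigma_j\in\{\pm 1\},\ |\eta_j|\le 2\sqrt 3\,\epsilon\lambda_{q_j},
\]
the plan is to combine the anchoring of $\spt(\widehat{u_q})$ to the plane $P_{q\bmod 3}$ with~\eqref{spt-u_qj-region}.

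\emph{Step (a): $q_2<q_3$ is impossible.} By Proposition~\ref{prop:windmill} at least two of $q_1,q_2,q_3$ agree mod $3$. If $q_2\equiv q_3$, then $q_3-q_2$ is a positive multiple of $3$, so $\lambda_{q_3}\ge 8\lambda_{q_2}$; the triangle inequality applied to $\xi_3=-\xi_1-\xi_2$ together with~\eqref{spt-u_qj-region} forces $\lambda_{q_3}(1-\sqrt 3\,\epsilon)\le 2\sqrt 5\,(1+O(\epsilon))\lambda_{q_2}$, inconsistent for small $\epsilon$ since $8>2\sqrt 5$. The remaining subcases single out one ``lone'' mode $\xi_{j_0}$, $j_0\in\{2,3\}$, anchored at a different plane from the other two. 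Following the $L^\perp$-projection argument of Proposition~\ref{prop:windmill}, $-\pi_{L^\perp}(\xi_{j_0})$ must lie within $O(\epsilon(\lambda_{q_1}+\lambda_{q_2}+\lambda_{q_3}))$ of two distinct lines in $L^\perp$ meeting at angle $\pi/3$; hence
\[
|\pi_{L^\perp}(\xi_{j_0})|\le C\,\epsilon\,(\lambda_{q_1}+\lambda_{q_2}+\lambda_{q_3}),
\]
while~\eqref{blur-is-small} yields $|\pi_{L^\perp}(\xi_{j_0})|\ge (2\lambda_{q_{j_0}}/\sqrt 5)(1-O(\epsilon))$. For $j_0=3$, using $\lambda_{q_1},\lambda_{q_2}\le\lambda_{q_3}$ reduces this to $\tfrac{2}{\sqrt 5}(1-O(\epsilon))\le 3C\epsilon$, excluded for small $\epsilon$. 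For $j_0=2$, one first bounds $\lambda_{q_3}\le 2\sqrt 5\,(1+O(\epsilon))\lambda_{q_2}$ by the triangle inequality and then obtains the analogous scalar inequality, again excluded.

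\emph{Step (b): $q_2\ge q_1+3$.} With $q_1<q_2=q_3$, dividing $\xi_2+\xi_3=-\xi_1$ by $\lambda_{q_2}$ leaves
\[
\sigma_2 F^{\Circled{i_2}}_{q_2\bmod 3}+\sigma_3 F^{\Circled{i_3}}_{q_2\bmod 3}=-\sigma_1\tfrac{\lambda_{q_1}}{\lambda_{q_2}}F^{\Circled{i_1}}_{q_1\bmod 3}+O(\epsilon).
\]
The left-hand side is an integer vector, so its non-zero values have norm $\ge 1$, whereas for $q_2-q_1\ge 2$ the right-hand side has norm $\le\sqrt 5/4+O(\epsilon)<1$; the left-hand side therefore vanishes, and the pairwise non-parallelism of $F^{\Circled{1}},F^{\Circled{2}},F^{\Circled{3}}$ forces $i_2=i_3$ and $\sigma_2=-\sigma_3$. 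Consequently $\xi_2+\xi_3=\eta_2+\eta_3$ has norm $\le 4\sqrt 3\,\epsilon\lambda_{q_2}$, which combined with $|\xi_1|\ge\lambda_{q_1}(1-\sqrt 3\,\epsilon)$ yields $\lambda_{q_2}/\lambda_{q_1}\ge(1-\sqrt 3\,\epsilon)/(4\sqrt 3\,\epsilon)$, exceeding $8$ for $\epsilon_0$ chosen small enough. The edge case $q_2-q_1=1$ must be excluded separately: the right-hand side then has norm $\le\sqrt 5/2+O(\epsilon)$, admitting either a vanishing leading term (which forces the impossible $\lambda_{q_1}\le C\epsilon\lambda_{q_1}$) or a leading term of unit norm, which would require $\sigma_1 F^{\Circled{i_1}}_{q_1\bmod 3}\parallel F^{\Circled{1}}_{q_2\bmod 3}$, contradicted by the geometry of the rotated frames.

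The main obstacle is the intermediate-lone subcase $j_0=2$ of step~(a): the line-projection constraint alone is compatible with $\lambda_{q_2}\ll\lambda_{q_3}$, and coupling it with the triangle ceiling $\lambda_{q_3}\le 2\sqrt 5\,\lambda_{q_2}$ is essential to close the estimate.
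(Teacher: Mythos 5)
Your proof is correct in substance but takes a genuinely different route from the paper. The paper cases on the \emph{separation} $q_3-q_1\in\{1,2,\ge3\}$: for $q_3-q_1\le 2$ it shows directly that $\xi+\eta$ (two modes anchored at the same plane) stays close to that plane while the third mode does not, for which it needs $\lambda_{q_3}/\lambda_{q_1}$ bounded; for $q_3-q_1\ge 3$ it first pins $q_2\in\{q_3-1,q_3\}$ by a size comparison, then kills $q_2=q_3-1$ by the same plane-distance argument. You instead prove the two claims $q_2=q_3$ and $q_1+3\le q_2$ independently. For $q_2=q_3$ you extend the windmill argument (the $L^\perp$-projection) to the case where one lone mode sits off-plane, using that a point close to two distinct lines through the origin at angle $\pi/3$ is close to the origin; this avoids casing on $q_3-q_1$ but requires the additional triangle ceiling $\lambda_{q_3}\lesssim\lambda_{q_2}$ in the $j_0=2$ subcase, which you correctly identify as the crux. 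For the separation bound you decompose each mode as ``skeleton $+$ blur'' and exploit the discreteness of the lattice of skeleton frequencies, which is a cleaner and more quantitative argument than the paper's size/plane estimates. Both routes are sound; yours is slightly more algebraic and arguably shorter in the separation step, the paper's is more uniform in that both steps use the same plane-distance mechanism.

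Two points need tightening. First, the statement ``the left-hand side is an integer vector'' is literally false for $q_2\bmod 3\ne 0$, since $F^{\Circled{i}}_{1},F^{\Circled{i}}_{2}$ have irrational entries; you should say that after applying $\rot^{-(q_2\bmod 3)}$ (an isometry) the left-hand side becomes $\sigma_2F_0^{\Circled{i_2}}+\sigma_3F_0^{\Circled{i_3}}\in\Z^3$, so its nonzero values still have norm $\ge 1$. Second, ``contradicted by the geometry of the rotated frames'' should be made concrete: the candidate parallelism $F^{\Circled{i_1}}_{q_1\bmod 3}\parallel F^{\Circled{1}}_{q_2\bmod 3}$ with $q_1\not\equiv q_2\pmod 3$ would force both vectors to lie on $P_{q_1\bmod 3}\cap P_{q_2\bmod 3}=L$, but none of $F_0^{\Circled{1}},F_0^{\Circled{2}},F_0^{\Circled{3}}$ is parallel to the span of $(2,-1,0)$. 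With these two clarifications the argument closes.
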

	
	\begin{proof}
	We distinguish three elementary cases, depending on the separation between the largest $q_3$ and the smallest $q_1$.
	
	\emph{Case 1.} 
	$q_1=q_3-1$. Since $q_2\in \{q_1,q_3\}$ there are exactly two modes anchored at the same plane and the third mode at a different one, say $\xi,\eta$ are anchored at $P_0$ while $\zeta$ is not. Then, on the one hand
	\begin{equation}\label{eq:p3oh} dist(\xi+\eta,P_0) \le dist(\xi,P_0) + dist(\eta,P_0) \le 2\epsilon (|\xi|+|\eta|) \le 2 \epsilon (2\sqrt{5}+O(\epsilon)) \lambda_{q_3}, \end{equation}
	with the last inequality given by \eqref{spt-u_qj} and the fact that $q_3$ is the largest of $q_j$'s.
	On the other hand, projections do not increase distances, so $ dist(\zeta,P_0) \ge  dist(\pi_{L^\perp}(\zeta),l_0)$, with $l_0=\pi_{L^\perp} (P_0)$ (recall setting of proof of Proposition~\ref{prop:windmill}). 
	Since the angle between the planes $P_\qm$ is always $\frac{\pi}{3}$ (thus between lines $l_\qm$ being their projections) and $\zeta$ is anchored to  $l_\qm \neq l_0$, we have, cf Figure \ref{fig:2}
		\begin{equation}\label{eq:p3ah}  
		\begin{aligned}
		dist(\zeta,P_0) &\ge  dist(\pi_{L^\perp}(\zeta),l_0) \ge \tilde \lambda_{q_1}(1-O(\epsilon)) ( \sin(\frac{\pi}{3})- O(\epsilon) ) \\
		&\ge ( \frac{\sqrt{15}}{5} - O(\epsilon) )  \lambda_{q_1} = ( \frac{\sqrt{15}}{10} - O(\epsilon) )  \lambda_{q_3}
		\end{aligned}
		\end{equation}
Comparing \eqref{eq:p3oh} and \eqref{eq:p3ah}, we see that $dist(\xi+\eta,P_0)< dist(\zeta,P_0)$ for $\epsilon\le\epsilon_0$, so there is no interaction.
	
	\emph{Case 2.}	$q_1=q_3-2$. There are two possibilities: 
	
	a) $q_2=q_1+1$, i.e.\ $q_2$ is strictly inbetween $q_1$ and $q_3$. Then all $\xi_j$ are anchored at a different plane, so by Proposition~\ref{prop:windmill} there is no interaction.
	
b) $q_2\in \{q_1,q_3\}$. Then the very same argument as in Case~1 applies, with a smaller but fixed parameter $\epsilon_0$.
	
	\emph{Case 3.}
	$q_1\le q_3-3$. We can estimate using \eqref{spt-u_qj} twice
	\[ (\sqrt{5} + 4\epsilon) \lambda_{q_2} > |\xi_2| = |\xi_1+\xi_3| \ge |\xi_3|-|\xi_1| \ge (1-\frac{\sqrt{5}}{8} -\frac{5\epsilon}{2}) \lambda_{q_3} > \frac{\sqrt{5}+4\epsilon}{4} \lambda_{q_3}, \]
	where the last inequality uses $\epsilon\le\epsilon_0$. Comparing the leftmost and the rightmost quantity above, we see that necessarily $q_2\ge q_3-1$. We have thus two options: either $q_2+1=q_3$ or $q_2=q_3$; the latter being the final statement. Therefore it remains to exclude $q_2+1=q_3$. In such case $\xi_2$ and $\xi_3$ are anchored at different planes, say $P_0$ and $P_1$, respectively. Analogously to \eqref{eq:p3ah} we thus have
	\[ dist(\xi_3,P_0) \ge ( \frac{\sqrt{15}}{5} - O(\epsilon) )  \lambda_{q_3}.  \]
The mode $\xi_2$ is anchored at the plane $P_0$, so $-\xi_2$ is also anchored at the plane $P_0$. Consequently, via triangle inequality
\[
 |\xi_2+\xi_3| + 2 \epsilon |\xi_2| \ge dist(\xi_3,P_0).
\]
Together, the above two inequalities yield
\[ |\xi_2+\xi_3|\ge dist(\xi_3,P_0) - 2 \epsilon |\xi_2| \ge (\frac{\sqrt{15}}{5} - O(\epsilon)) \lambda_{q_3} > (\frac{\sqrt{5}}{8} + \frac{\epsilon}{2}) \lambda_{q_3} \ge |\xi_1|, \]
with the last inequality due to  \eqref{spt-u_qj} and $q_1\le q_3-3$. Hence we excluded  $q_2+1=q_3$.
	\end{proof}
		\subsection{Proof of \eqref{Pi^local2}}
	Observe that
	\begin{equation}\label{eq:locA}
	\begin{aligned}
	\piloc &= \int  (u_q^{\Circled{2}} + u_q^{\Circled{3}} + u_{q+1}) \otimes (u_{q-1}+ u_{q} + u_{q+1}) : \nabla (u_{q-1}+ u_q^{\Circled{1}}) \\
	&= \int\left( u_{q}^{\Circled{2}} + u_{q}^{\Circled{3}} \right) \otimes u_q : \nabla u_{q}^{\Circled{1}}.%\int\left( u_{q}^{\Circled{2}} \odot u_{q}^{\Circled{3}} \right) : \nabla u_{q}^{\Circled{1}},
	% GS: Changed q_j to q here.
	\end{aligned}
	\end{equation}
	The former identity of \eqref{eq:locA} is  \eqref{Pi^local}; the latter holds thanks to  Proposition~\ref{prop:nowhere-near}. Indeed, the remaining terms involve at least two different $q_j$'s,  but they do not satisfy the separation condition $q_1 + 3 \le q_3$ (here $q_1=q-1$, $q_3=q+1$) demanded in Proposition~\ref{prop:nowhere-near} for a non-zero interaction.
	
	Let us now take $\xi^1, \xi^2$ in the Fourier support of the tensor product terms in \eqref{eq:locA}. For a nonzero interaction necessarily $\xi^1+\xi^2\in \spt (\widehat{{u}_{q}^{\Circled{1}}})$, which requires by construction (consult Figure \ref{fig:1}) one of  $\xi^1, \xi^2$ in $\spt (\widehat{{u}_{q}^{\Circled{2}}})$ and the other in $\spt (\widehat{{u}_{q}^{\Circled{3}}})$, implying	\eqref{Pi^local2}.
	
	\subsection{Proof of  \eqref{Pi^nonloc2}}
First observe that none of the integrals in \eqref{Pi^nonloc} involves only one $u_{q}$, consequently Proposition~\ref{prop:nowhere-near} applies ($q_1<q_3$). Hence, any non-zero interaction within $\pinonloc$ has the form
\[ \Pi_{l,k} \coloneqq \int u_k\otimes u_k : \nabla u_l, \quad \text{ where } \; l\le q\le k \;\text{ and }\; l+3\le k. \]
Let us analyse $\Pi_{l,k}$. First note that $\xi_1+\xi_2+\xi_3=0$ together with $\xi_1,\xi_2 \in \spt (\widehat{u_k})$ and $\xi_3 \in \spt (\widehat{u_l})$ imply that $\xi_1\in A_k^{\Circled{i}}$ and $\xi_2\in -A_k^{\Circled{i}}$ with the same $i$, or $\xi_1\in- A_k^{\Circled{i}}$ and $\xi_2\in +A_k^{\Circled{i}}$ with the same $i$. Indeed, in any other case, by the separation of active regions $\pm A_k^{\Circled{i}}, \pm A_k^{\Circled{j}}$ with $i\ne j$ (cf Figure \ref{fig:1}) one has
\[|\xi_1+\xi_2| \ge (1-O(\epsilon))\lambda_k =  8(1-O(\epsilon))\lambda_l > |\xi_3|,\] 
with the last inequality via \eqref{spt-u_qj}, so there cannot be any interaction. Therefore
\[\Pi_{l,k}= \sum_{i=1,2,3} \int u_k^{\Circled{i}} \otimes u_k^{\Circled{i}} : \nabla u_l = \sum_{i=1,2,3} \int u_k^{\Circled{i}} \otimes u_k^{\Circled{i}} : \nabla u_l^{\Circled{1}},  \]
where the last identity is due to \eqref{eq:fourier-flux} and the following observation: the product of Fourier modes from opposite regions is real by \eqref{uq-fourier}, but by \eqref{uq-der-1} modes in $\nabla u_q^{\Circled{i}}$ are purely imaginary unless $i=1$. The total flux is real, so any imaginary components are irrelevant.

Summarising, the non-local flux reduces to
\[\pinonloc = \sum_{l\le q} \sum_{k\ge \max \{q,l+3\}} \Pi_{l,k}. \]
Using the expression for $\Pi_{l,k}$ we can strip it down even more.
Since $\xi_1,\xi_2$ are from opposite active regions, we can estimate
\[ \lambda_l (1-2\epsilon) \le |\xi_3|=|\xi_1+\xi_2| \le 2 \operatorname{diam} (A_k^{\Circled{i}}) \le 4\sqrt{3} \epsilon \lambda_k. \]
Taking logarithms above
\[ l\le \log_2 \epsilon + k + 2+\log_2\frac{\sqrt{3}}{1-2\epsilon} \implies k > l -3-\log_2 \epsilon \]
for $\epsilon\le \epsilon_0$, as long as $\epsilon_0 < \frac{2 -\sqrt{3}}{4}$. Now \eqref{Pi^nonloc2} follows immediately and the definition of $N(\epsilon)$ is justified. \hfill $\square$

	\section{Proof of Theorem \ref{thm}}\label{sec:fin}
	\subsection{Exact estimates for $\piloc$}
	Recall \eqref{Pi^local2}; using \eqref{eq:fourier-flux} it reads
	\[
	\piloc =  \sum_{\xi^1 + \xi^2 + \xi^3 = 0} \widehat{u_{q}^{\Circled{2}}}  (\xi_2) \odot \widehat{u_{q}^{\Circled{3}}} (\xi_3) : \widehat{\nabla u_{q}^{\Circled{1}}} (\xi_1),
	\]
	where $\xi^1 \in \pm A_{q}^{\Circled{1}}, \xi^2 \in \pm A_{q}^{\Circled{2}}, \xi^3 \in \pm A_{q}^{\Circled{3}}$, since these are the active regions of respective $u_{q}^{\Circled{i}}$'s, see \eqref{uq-fourier}. By our construction, for $\xi^1 \in A_{q}^{\Circled{1}}$ and $\xi^2 \in A_{q}^{\Circled{2}}$, $\xi^1+\xi^2 \in A_{q}^{\Circled{3}}$ always holds, compare Figure \ref{fig:1} and recall from Section~\ref{ssec:full} that we used the blurs $A_{q}^{\Circled{3}}$ with twice the side length of the blurs $A_{q}^{\Circled{i}}$, $i=1,2$. Consequently, any $\xi^1 \in A_{q}^{\Circled{1}}$ and $\xi^2 \in A_{q}^{\Circled{2}}$ interacts via $\xi^1+\xi^2 +\xi^3=0$ with precisely one  $\xi^3 \in -A_{q}^{\Circled{3}}$; if only one of the signs of active regions of $\xi^1$, $\xi^2$ is flipped, there is no interaction; and any $\xi^1 \in -A_{q}^{\Circled{1}}$ and $\xi^2 \in -A_{q}^{\Circled{2}}$ interacts with precisely one  $\xi_3 \in A_{q}^{\Circled{3}}$. We thus arrive via \eqref{uq-fourier} and \eqref{uq-der-1} at
	\[ 
	\begin{aligned}
	&\piloc= \\ & - (\epsilon\lambda_{q})^{-6}
	2 \!\!\!\! \sum_{
	\substack{
	{\xi^1 \in A_{q}^{\Circled{1}}}\\ {\xi^2 \in A_{q}^{\Circled{2}}}}} \!\!\!\! \left( V_{\qm}^{\Circled{2}}+O(\epsilon) \right) \odot \left( V_{\qm}^{\Circled{3}}+O(\epsilon) \right) %\\&\hspace{5cm}
	: \left( V_{\qm}^{\Circled{1}} \otimes F_{\qm}^{\Circled{1}} +O(\epsilon) \right) \\
	&= - 2(\epsilon \lambda_{q})^{-6}|A_{q}^{\Circled{1}}| |A_{q}^{\Circled{2}}| \left( V_{\qm}^{\Circled{2}}  \odot  V_{\qm}^{\Circled{3}} : V_{\qm}^{\Circled{1}} \otimes F^{\Circled{1}}_{\qm} +O(\epsilon) \right).
	\end{aligned}
	\]	
	It holds $a \odot  b:c \otimes d =   (b \cdot c) \, (a \cdot d) + (a \cdot c) \, (b \cdot d)$. This, the fact that rotations, in particular $\rot$, do not alter the dot product, and the values of $V^{\Circled{i}}$ and $F^{\Circled{i}}$ provided in Section~\ref{ssec:planar} yield
	\[
	\piloc
	=  2(\epsilon \lambda_{q})^{-6}|A_{q}^{\Circled{1}}| |A_{q}^{\Circled{2}}| \left( 1+O(\epsilon) \right) \in 2(\epsilon \lambda_{q})^{-6}  [(\epsilon\lambda_{q}-1)^3, (\epsilon\lambda_{q}+1)^3] (1+ O(\epsilon)), 
	\]
	with the latter identity using \eqref{blur-size}.
		
		Summarizing, we obtain, for $O(\epsilon)$ independent of $q$
	\begin{equation}\label{piloc-value}
	2\left(\frac{\epsilon\lambda_{q}-1}{\epsilon\lambda_{q}}\right)^6 (1-O(\epsilon)) \le \piloc \le 2\left(\frac{\epsilon\lambda_{q}+1}{\epsilon\lambda_{q}}\right)^6 (1+O(\epsilon)).
	\end{equation}
	For any $\epsilon>0$ fixed, the lower and upper bound in \eqref{piloc-value} becomes, respectively, $2 \pm O(\epsilon)$ for large $q$'s. 
	
		\subsection{Estimate for $\pinonloc$} 
	Recall \eqref{Pi^nonloc2}. It reads via \eqref{eq:fourier-flux}
	\begin{align*}
	&\pinonloc\\
	&= \sum_{i=1,2,3} \sum_{l\le q}\sum_{k>\max \{q, l+N_\epsilon\}} 
	\sum_{\substack{\xi_1 + \xi_2 + \xi_3 =0 \\ |\xi^1| \in A_{k}^{\Circled{i}}, \; %\xi^2 \in \mp A_{q'}^{\Circled{i}}, 
			|\xi^3| \in A_{l}^{\Circled{1}} } }\left(\widehat{u_{k}^{\Circled{i}}} (\xi_1) \otimes \widehat{u_{k}^{\Circled{i}}} (\xi_2) : \widehat{\nabla u_{l}^{\Circled{1}}} (\xi_3)\right).
    \end{align*}
	A brutal estimate for the innermost sum, using \eqref{uq-fourier} and \eqref{uq-der-1},  yields
	\[
	\begin{aligned}
 \sum_{ |\xi^1| \in A_{k}^{\Circled{i}}, \; |\xi^3| \in A_{l}^{\Circled{1}} }\left|\widehat{u_{k}^{\Circled{i}}} (\xi_1) \otimes \widehat{u_{k}^{\Circled{i}}} (\xi_2) : \widehat{\nabla u_{l}^{\Circled{1}}} (\xi_3)\right| &\le C \epsilon^{-6} \lambda_{k}^{-\frac{14}{3}} \lambda_{l}^{-\frac{4}{3}} |A_{k}^{\Circled{i}}| |A_{l}^{\Circled{1}}| \\
 &\le C \epsilon^{-3} \lambda_{k}^{-\frac{5}{3}} \lambda_{l}^{-\frac{4}{3}} (\epsilon\lambda_{l}+1)^3,
 \end{aligned}
	\]
	with the latter inequality using \eqref{blur-size} and that, since we interested in the limit $q \to \infty$, for any fixed $\epsilon$, $\epsilon \lambda_{k} \ge 1$ for $k\ge q$. %  is much smaller than 
 Therefore
	\[
	\begin{aligned}
	|\pinonloc| &\le C \sum_{l\le q}\sum_{k>\max \{q, l+N_\epsilon\}} \lambda_{k}^{-\frac{5}{3}} \max ({ \lambda_{l}^{\frac{5}{3}}, \epsilon^{-3} \lambda_{l}^{-\frac{4}{3}}}) \\
	&= C \sum_{l\le q} \left( \max \{1, \frac{1}{\epsilon \lambda_{l}} \} \right)^{3} \sum_{k>\max \{q, l+N_\epsilon\}} \left( \frac{\lambda_l}{\lambda_{k}} \right)^{\frac{5}{3}} .
	\end{aligned}
	\]
	Observe that the $\max$ in the outer sum is 1 if $l\ge l_\epsilon \coloneqq -\frac{\log\epsilon}{\log2}$. Therefore the sums split into
	\begin{align*}
	    &|\pinonloc|\\
	    &\le \sum_{l<l\epsilon} (\epsilon\lambda_l)^{-3} \sum_{k\ge q} \left( \frac{\lambda_l}{\lambda_k} \right)^{\frac53} + \sum_{l_\epsilon\le l \le q-N_\epsilon} \sum_{k\ge q} \left( \frac{\lambda_l}{\lambda_k} \right)^{\frac53} + \sum_{q-N_\epsilon<l\ge q} \sum_{k\ge l+N_\epsilon} \left( \frac{\lambda_l}{\lambda_k} \right)^{\frac53}
	\end{align*}
	(where the second sum might be empty). 
	The first summand above vanishes as $q \to \infty$ (and $\epsilon$ fixed), the second term is bounded independent of $q$ by $2^{-\frac53 N_\epsilon} =O(\epsilon)$, and the third sum is bounded by $N_\epsilon 2^{-\frac53 N_\epsilon} =O(\epsilon \log\epsilon)$, in view of $N_\epsilon = \lfloor 3- \log(\epsilon) \rfloor$. 
	Thus we have, for $q$ sufficiently large with respect to $\epsilon$,
	\begin{equation}\label{eq:pinonloc-est}
	|\pinonloc| \le O(\epsilon \log\epsilon).
	\end{equation}
	
	\subsection{Conclusion of the proof}
	Recall that the energy flux of $\Pi_q(U)$ is cubic in $U$, so by multiplying the original construction with $(c/2)^{1/3}$ we obtain a field such that by \eqref{piloc-value} and \eqref{eq:pinonloc-est}
	\[ c-O(\epsilon\log\epsilon) < \Pi_q(U) < c+O(\epsilon\log\epsilon) \]
	holds for sufficiently large $q$. Choosing $\epsilon$ in the construction sufficiently small with respect to $\delta$ then proves \eqref{limsup-liminf}.

	\bibliography{FluxBib}
	\bibliographystyle{amsplain}
	
\end{document}